\numberwithin{equation}{section}
\theoremstyle{plain}
\newtheorem{thm}{Theorem}[section]
\newtheorem{prop}[thm]{Proposition}
\newtheorem{cor}[thm]{Corollary}
\newtheorem{lemma}[thm]{Lemma}
\newtheorem{dfn}[thm]{Definition}
\theoremstyle{remark}
\newtheorem{rmq}[thm]{Remark}
\begin{document}

\title{
Precise smoothing effect in the exterior of balls}

\author{Oana Ivanovici  \\Universite Paris-Sud, Orsay,\\
Mathematiques, Bat. 430, 91405 Orsay Cedex, France\\
oana.ivanovici@math.u-psud.fr}

%\ead{oana.ivanovici@math.u-psud.fr}
%{http://www.eleves.ens.fr/~ivanovic}
%\address{Universit{\'e} Paris Sud,
%Math{\'e}matiques, B{\^a}t 430, 91405 Orsay Cedex}
\date{ }

\bigskip
\bigskip
\bigskip
\bigskip
\maketitle

\bigskip
\bigskip

%\textbf{Abstract:} We prove a local smoothing effect and
%Strichartz type estimates for the Schrödinger equation on the
%exterior of balls in $\mathbb{R}^{d}$.
%We prove global wellposedness of the Cauchy problem for the cubic
%nonlinear Schrödinger equation in the exterior of a ball of
%$\mathbb{R}^{3}$ with Dirichlet or Neumann boundary conditions.
%The main argument is based on a generalized Strichartz inequality.
\section{Introduction}
We are interested in this article in investigating the smoothing
effect properties of the solutions of the Schr\"odinger equation.
Since the work by Craig, Kapeller and Strausss~\cite{cks95}, Kato~\cite{Ka}, Constantin and Saut~\cite{CoSa}
establishing the smoothing property, many works have dealt with
the understanding of this effect. In particular the work by
Doi~\cite{Do} and Burq~\cite{Bu1,Bu2} shows that it is closely
related to the infinite speed of propagation
 for the solutions of Schr\"odinger equation. Roughly speaking, if one considers a wave
  packet with wave length $\lambda$, it is known that it propagates with speed $\lambda$
  and  the wave will stay in any bounded domain only for a time of order $1/\lambda$.
  As a consequence, taking the $L^2$ in time norm will lead to an improvement of
  $1/\lambda^{1/2}$ with respect to taking an $L^\infty$ norm, leading to a gain
  of $1/2$ derivatives. This heuristic argument can be transformed into a proof of
  the smoothing effect either by direct calculations (in the case of the free Schr\"odinger
  equation) or by means of resolvent estimates (see \cite{BeKl} for the case of a perturbation
  by a potential or \cite{bgt03} for the boundary value problem). In view of this simple heuristics,
   it is natural to ask whether one can refine (and improve) such smoothing type estimates if
   one considers smaller space domains (whose size will shrink as the wave length increases).
   A very natural context in which one can test this heuristics is the case of the exterior
of a convex body (or more generally the exterior of several convex
bodies), in which case natural candidates for the $\lambda$
dependent domains are $\lambda^{-\alpha}$ neighborhoods of the
boundary. This is the main aim of this paper. To keep the paper at
a rather basic technical level, we choose to consider only balls,
for which direct calculations (with Bessel functions) can be
performed.  Our first result reads as follows:
%Enonce du theoreme sur l'effet regularisant
\begin{thm}\label{thm1}
Let $\Omega=\mathbb{R}^{3}\setminus B(0,1)$, $T>0$,
$0\leq\alpha<\frac{2}{3}$ and $\lambda\geq 1$. Let $\psi$ and
$\chi\in C^{\infty}_{0}(\mathbb{R}^{*})$ be smooth functions with compact support, $\psi=1$ near $1$, $\chi=1$ near $0$. Set
$\chi_{\lambda}(|x|):=\chi(\lambda^{\alpha}(|x|-1))$, where
$x$ denotes the variable on $\Omega$. Then one
has
\begin{itemize}
\item For $s\in [-1,1]$ and
$v(t)=\int_{0}^{t}e^{i(t-\tau)\Delta_{D}}\psi(\frac{-\Delta_{D}}{\lambda^{2}})\chi_{\lambda}gd\tau$
\begin{equation}\label{i}
\|\chi_{\lambda}\psi(\frac{-\Delta_{D}}{\lambda^{2}})v\|_{L_{T}^{2}H_{D}^{s+1}(\Omega)}\leq C
\lambda^{-\frac{\alpha}{2}}\|\psi(\frac{-\Delta_{D}}{\lambda^{2}})\chi_{\lambda}g\|_{L_{T}^{2}H_{D}^{s}(\Omega)},
\end{equation}

\item For $s\in[0,1]$
\begin{equation}\label{ii}
\|\chi_{\lambda}e^{it\Delta_{D}}\psi(\frac{-\Delta_{D}}{\lambda^{2}})u_{0}\|_{L^{2}_{T}H_{D}^{s+\frac{1}{2}}(\Omega)}\leq C
\lambda^{s-\frac{\alpha}{4}}\|\psi(\frac{-\Delta_{D}}{\lambda^{2}})u_{0}\|_{L^{2}(\Omega)}.
\end{equation}
\end{itemize}
Here the constants $C$ do not depend on $T$, i.e. the estimates are \emph{global} in time.
\end{thm}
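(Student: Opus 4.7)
The plan is to reduce both estimates to a single uniform semiclassical resolvent bound of the form
\[
\sup_{\mu\geq 1}\bigl\|\chi_\lambda\,\psi(\tfrac{-\Delta_D}{\lambda^2})(-\Delta_D-(\mu+i0)^2)^{-1}\psi(\tfrac{-\Delta_D}{\lambda^2})\,\chi_\lambda\bigr\|_{L^2(\Omega)\to L^2(\Omega)}\le C\lambda^{-1-\alpha/2}.
\]
The classical Kato--Burq smoothing effect (as in \cite{Bu1,Bu2}) would correspond to the weaker bound $C\lambda^{-1}$; the extra factor $\lambda^{-\alpha/2}$ is exactly what one expects if the additional confinement to a $\lambda^{-\alpha}$-shell of $\partial\Omega$ improves the single smoothing gain by $\lambda^{-\alpha/4}$, then squared through $TT^{*}$. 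Once this resolvent estimate is in hand, \eqref{i} follows from a standard $TT^{*}$/duality argument combined with Plancherel in time, whereas \eqref{ii} follows by interpolation between the endpoint $s=0$ (which uses the same resolvent bound via the spectral theorem) and $s=1$ (where the $H^{s+1/2}$ norm is controlled by the frequency cutoff $\psi$).

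To prove the resolvent bound I would use the full spherical symmetry of $\Omega=\mathbb{R}^{3}\setminus B(0,1)$ and decompose on spherical harmonics $Y_{\ell,m}$. On each spherical mode, after the usual conjugation by $r$, the Dirichlet Laplacian becomes the one-dimensional operator $H_\ell=-\partial_r^{2}+\ell(\ell+1)/r^{2}$ on $[1,\infty)$ with Dirichlet condition at $r=1$, and the outgoing Green function of $H_\ell-(\mu+i0)^{2}$ admits an explicit closed form built from Hankel functions $H^{(1)}_\nu(\mu r)$ of order $\nu=\ell+1/2$, together with an image-type correction that enforces the Dirichlet condition. It then suffices to bound this kernel on $[1,1+c\lambda^{-\alpha}]^{2}$ uniformly in $\ell$ and $\mu$ and sum over $\ell$ using orthogonality.

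The kernel analysis splits into three regimes, determined by the position of the turning point $r_\ast=\nu/\mu$ relative to the shell. In the \emph{transverse} regime $\nu\le\mu-c\mu^{1/3}$, Debye asymptotics give oscillatory Hankel functions of amplitude $\mu^{-1/2}$, and a Schur estimate of the kernel over the shell already yields a bound at least as strong as required. In the \emph{elliptic} regime $\nu\ge\mu+c\mu^{1/3}$, the Hankel function decays exponentially away from $r_\ast$ so the contribution is negligible once $r_\ast$ lies outside the shell. The intermediate \emph{Airy} regime $|\nu-\mu|\lesssim\mu^{1/3}$ involves only $O(\mu^{1/3})$ values of $\ell$ and requires the Airy-type asymptotics of $H^{(1)}_\nu(\mu r)$. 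Summing the three contributions and pairing with the spectral cutoff $\psi(-\Delta_D/\lambda^{2})$ then produces the uniform resolvent bound with the announced gain.

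The main obstacle is the Airy/turning-point zone, where the Bessel functions are of size $\mu^{-1/3}$ instead of $\mu^{-1/2}$, the pointwise bounds worsen, and one must use the spatial localization to the shell to compensate. The restriction $\alpha<\tfrac{2}{3}$ enters exactly at this point: the Airy scale near the boundary is $\lambda^{-2/3}$, so only shells thicker than this scale are wide enough for the oscillatory regime to dominate on most of the shell and for the turning-point loss to be absorbed; above the threshold $\alpha=\tfrac{2}{3}$ one can no longer resolve the turning point without further loss. Tracking the dependence on $\ell$ and $\mu$ uniformly across the three regimes, and reassembling them into a single estimate with the exact exponent $\lambda^{-1-\alpha/2}$, is the delicate technical part of the argument.
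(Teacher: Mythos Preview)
Your approach is essentially the same as the paper's: reduce the Schr\"odinger estimates to a resolvent bound for the Helmholtz equation via Plancherel and limiting absorption, then prove that bound by spherical-harmonic decomposition and explicit Hankel--Airy asymptotics on each mode, splitting into transverse, diffractive/Airy, and elliptic regimes; the paper's Propositions~2.3--2.4 give exactly the mode-by-mode bound with worst case $\lambda^{-(1+\alpha/2)}$ that you state, and the $TT^{*}$ argument for \eqref{ii} at $s=0$ is also the paper's route.

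One point where you are too quick: for \eqref{ii} at $s=1$ you say the $H^{3/2}$ norm is ``controlled by the frequency cutoff $\psi$,'' but the cutoff $\chi_\lambda$ sits \emph{outside} the spectral projector and does not commute with it, so you cannot simply trade $H^{3/2}$ for $\lambda^{3/2}L^{2}$. The paper handles this via a commutator argument (its Lemma~3.3): one writes $(-\Delta_D+1)(\chi_\lambda e^{it\Delta_D}\psi u_0)=\chi_\lambda(-\Delta_D+1)e^{it\Delta_D}\psi u_0-[\Delta_D,\chi_\lambda]e^{it\Delta_D}\psi u_0$, estimates the first piece by another $TT^{*}$ using \eqref{i} at $s=1/2$, and bounds the commutator term using that $[\Delta_D,\chi_\lambda]$ loses only $\lambda^{\alpha}$, which is absorbed since $\alpha<1$. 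Similarly, for \eqref{i} the passage from the $L^{2}\to L^{2}$ resolvent bound to the full range $s\in[-1,1]$ is not purely spectral: the paper gets $s=0$ by an integration-by-parts energy argument, $s=-1$ by duality, and $s=1$ again by commutator estimates with $[\Delta_D,\chi_\lambda]$. These steps are routine once the core resolvent bound is in hand, but your sketch should acknowledge that the spatial cutoff forces commutator bookkeeping at the higher-regularity endpoints.
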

Using this result, we can deduce new Strichartz type estimates for
the solution of the linear Schr\"odinger equation in the exterior,
$\Omega$, of a smooth bounded obstacle
$\Theta\subset\mathbb{R}^{3}$,
\begin{equation}\label{LS}
(i\partial_{t}+\Delta_{D,N})u =0,\\ \qquad u(0,x)=u_{0}(x), \qquad
u|_{\partial\Omega}=0 \text{ or }\partial_n u
\mid_{\partial\Omega}=0,
\end{equation}
that we denote respectively by $e^{it\Delta_D}u_0$ and $e^{it
\Delta_N}u_0$.
%theoreme sur Strichartz
\begin{dfn}
Let $q,r\geq
2$, $(q,r,d)\neq(2,\infty,2)$, $2\leq p\leq\infty$. A pair $(q,r)$ is called admissible in dimension $d$ if $q$, $r$ satisfy the scaling admissible condition
\begin{equation}\label{admis}
\frac{2}{p}+\frac{d}{q}=\frac{d}{2}.
\end{equation} 
\end{dfn}
\begin{thm}\label{thm2}(Strichartz estimates)
Let $\Theta=B(0,1)\subset\mathbb{R}^{3}$ and
$\Omega=\mathbb{R}^{3}\setminus \Theta$, $T>0$ and $(p,q)$ an admissible pair in dimension $3$. Let $\epsilon>0$ be an arbitrarily small constant. Then there exists a constant $C>0$ such that, for all
$u_{0}\in H_{D,N}^{\frac{4}{5p}+\epsilon}(\Omega)$ the following holds
\begin{equation}\label{stri}
\|e^{it\Delta_{D,N}}u_{0}\|_{L^{p}([-T,T],L^{q}(\Omega))}\leq
C\|u_{0}\|_{H_{D,N}^{\frac{4}{5p}+\epsilon}(\Omega)}.
\end{equation}
Moreover, a similar result holds true for a class of trapping
obstacles (Ikawa's example), i.e. for the case where $\Theta$ is a finite union
of balls in $\mathbb{R}^{3}$.
\end{thm}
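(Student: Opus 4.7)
I would deduce Theorem~\ref{thm2} from Theorem~\ref{thm1} by a near/far decomposition in space and a reduction to the Keel--Tao endpoint $(p,q)=(2,6)$. After a Littlewood--Paley decomposition in $\sqrt{-\Delta_{D,N}}$ and the square-function characterisation of $L^q(\Omega)$, it is enough to prove
\[
\|e^{it\Delta_{D,N}}\psi(-\Delta_{D,N}/\lambda^2)u_0\|_{L^p_T L^q(\Omega)}\le C\lambda^{\frac{4}{5p}}\|u_0\|_{L^2(\Omega)}
\]
for every dyadic $\lambda\ge 1$ and every $u_0$ spectrally localised at frequency $\lambda$, the dyadic summation then absorbing the arbitrarily small factor $\lambda^\epsilon$. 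Interpolating with the trivial conservation $\|u_\lambda\|_{L^\infty_T L^2}\le\|u_0\|_{L^2}$, all admissible pairs are recovered once the endpoint $(p,q)=(2,6)$ is settled with loss $\lambda^{2/5}$.

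For that endpoint, I would fix $\alpha=\tfrac{2}{5}$ and split $u_\lambda:=e^{it\Delta_{D,N}}\psi(-\Delta_{D,N}/\lambda^2)u_0=\chi_\lambda u_\lambda+(1-\chi_\lambda)u_\lambda$ using the layer cutoff of Theorem~\ref{thm1}. The boundary piece is handled directly by \eqref{ii} with $s=\tfrac{1}{2}$, combined with the Sobolev embedding $H^1(\Omega)\hookrightarrow L^6(\Omega)$:
\[
\|\chi_\lambda u_\lambda\|_{L^2_T L^6(\Omega)}\le C\|\chi_\lambda u_\lambda\|_{L^2_T H^1(\Omega)}\le C\lambda^{1/2-\alpha/4}\|u_0\|_{L^2}=C\lambda^{2/5}\|u_0\|_{L^2}.
\]
The exterior piece $v:=(1-\chi_\lambda)u_\lambda$ vanishes in a neighbourhood of $\partial\Omega$, so its zero-extension $\tilde v$ to $\mathbb{R}^3$ is a free-space Schr\"odinger solution with initial data of $L^2$-norm $\le\|u_0\|_{L^2}$ and source $F:=-[\Delta,\chi_\lambda]u_\lambda$ supported in the boundary layer. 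The free endpoint Strichartz inequality in $\mathbb{R}^3$ bounds $\|\tilde v\|_{L^2_T L^6(\mathbb{R}^3)}$ by $\|u_0\|_{L^2}$ plus a dual-Strichartz norm of $F$, and the latter is estimated via H\"older on the $\lambda^{-\alpha}$-thin support of $F$ together with a further application of \eqref{ii} (and, if needed, of \eqref{i}) controlling $\nabla u_\lambda$ and $u_\lambda$ in $L^2_T L^2$ on the layer. This closes the endpoint estimate at cost $\lambda^{2/5}$.

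For the Ikawa configuration of several disjoint balls the same scheme goes through, once Theorem~\ref{thm1} is extended to that setting using the logarithmic-loss cutoff resolvent estimates available when the trapped set has small topological pressure; the $\log\lambda$ factor produced in that extension is absorbed into $\lambda^\epsilon$. The main technical obstacle is the exterior step above: the commutator $[\Delta,\chi_\lambda]u_\lambda$ has amplitude of order $\lambda^{1+\alpha}$ on a layer of width $\lambda^{-\alpha}$, so the gain $\lambda^{-\alpha/4}$ provided by \eqref{ii} is only just enough to close the free Strichartz inequality, which is exactly why Theorem~\ref{thm1} requires $\alpha<\tfrac{2}{3}$ and why the final Strichartz loss $\tfrac{4}{5p}+\epsilon$, rather than the naively optimal $\tfrac{2}{3p}$, is obtained.
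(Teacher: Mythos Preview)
Your near-boundary step and the Littlewood--Paley/interpolation reduction are exactly what the paper does. The gap is in the exterior piece.

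You set $v=(1-\chi_\lambda)u_\lambda$, extend by zero, and propose to control the source $F=-[\Delta,\chi_\lambda]u_\lambda$ in a dual Strichartz norm using the gain from \eqref{ii}. This does not close. The commutator has two pieces, $(\Delta\chi_\lambda)u_\lambda$ and $2\nabla\chi_\lambda\cdot\nabla u_\lambda$, with coefficients of size $\lambda^{2\alpha}$ and $\lambda^{\alpha}$ respectively. Using \eqref{ii} with $s=\tfrac12$ you get $\|\tilde\chi_\lambda\nabla u_\lambda\|_{L^2_TL^2}\lesssim\lambda^{1/2-\alpha/4}\|u_0\|_{L^2}$, hence
\[
\|F\|_{L^2_TL^2}\lesssim \lambda^{\alpha}\cdot\lambda^{1/2-\alpha/4}\|u_0\|_{L^2}=\lambda^{1/2+3\alpha/4}\|u_0\|_{L^2},
\]
and H\"older on the $\lambda^{-\alpha}$-layer only buys an extra $\lambda^{-\alpha/3}$ when you pass to $L^{6/5}_x$. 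At $\alpha=\tfrac25$ this gives $\|F\|_{L^2_TL^{6/5}}\lesssim\lambda^{2/3}\|u_0\|_{L^2}$, strictly worse than the $\lambda^{2/5}$ you need; working in $L^1_TL^2$ or in $L^2_TH^{-1/2}$ does not help, because every derivative that falls on $\chi_\lambda$ costs $\lambda^\alpha$ and the $\lambda^{-\alpha/4}$ gain from \eqref{ii} is too small to cancel it. So the claim that the gain ``is only just enough to close'' is not correct.

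The paper avoids this by using a $\lambda$-\emph{independent} weight: set $v=\phi u_\lambda$ where $\phi$ equals the distance to $\partial\Omega$ near the boundary and $1$ far away. Then $[\Delta,\phi]$ is a \emph{fixed} first-order operator, and the ordinary (non-precise) smoothing effect gives
\[
\|[\Delta,\phi]u_\lambda\|_{L^2_TH^{-1/2}}\lesssim\|u_\lambda\|_{L^2_TH^{1/2}_{\mathrm{loc}}}\lesssim\|u_0\|_{L^2},
\]
with no $\lambda$-loss at all. Free Strichartz (Staffilani--Tataru / BGT) then yields $\|\phi u_\lambda\|_{L^p_TL^q}\lesssim\|u_0\|_{L^2}$. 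The factor $\lambda^\alpha$ appears only at the very last step, because on $\mathrm{supp}(1-\chi_\lambda)$ one has $\phi\gtrsim\lambda^{-\alpha}$, so $(1-\chi_\lambda)u_\lambda=(1-\chi_\lambda)\phi^{-1}(\phi u_\lambda)$ and $\|(1-\chi_\lambda)u_\lambda\|_{L^p_TL^q}\lesssim\lambda^{\alpha}\|u_0\|_{L^2}$. Balancing $\lambda^{\alpha}$ against the boundary-layer loss $\lambda^{(1-\alpha/2)/p}$ at $p=2$ gives $\alpha=\tfrac25$ and the final $\tfrac{4}{5p}$ loss. The point is that the $\lambda^\alpha$ price is paid once, as a division, rather than repeatedly through derivatives of a $\lambda$-dependent cutoff.
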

As a consequence of Theorem \ref{thm2} we deduce new global well-posedness results for
the non-linear Schr\"odinger equation in the exterior of several
convex obstacles, improving previous results by Burq, Gerard and
Tzvetkov~\cite{bgt03}. Consider the nonlinear Schr\"{o}dinger
equation on $\Omega$ subject to Dirichlet boundary condition
\begin{equation}\label{nls}
(i\partial_{t}+\Delta_{D})u=F(u)\quad \text{in} \quad
\mathbb{R}\times\Omega,\quad u(0,x)=u_{0}(x),\quad \text{on} \quad
\Omega,\quad u|_{\partial\Omega}=0.
\end{equation}
The nonlinear interaction $F$ is supposed to be of the form
$F=\partial V/\partial\bar{z}$, with $F(0)=0$, where the
"potential" $V$ is real valued and satisfies $V(|z|)=V(z)$,
$\forall z\in\mathbb{C}$. Moreover we suppose that $V$ is of class
$C^{3}$, $|D_{z,\bar{z}}^{k}V(z)|\leq C_{k}(1+|z|)^{4-k}$ for
$k\in\{0,1,2,3\}$ and that $V(z)\geq -(1+|z|)^{\beta}$, for some
$\beta<2+\frac{4}{d}$, $d=3$ (the last assumption avoid blow-up in the
focussing case).

Some phenomena in physics turn out to be modeled by exterior
problems and one may expect rich dynamics under various boundary
conditions. A first step in this direction is to establish well
defined dynamics in the natural spaces determined by the
conservation lows associated to \eqref{nls}. If $u(t,.)\in
H^{1}_{0}(\Omega)\bigcap H^{2}(\Omega)$ is a solution of
\eqref{nls} then it satisfies the conservation lows
\begin{equation}\label{conserv}
\frac{d}{dt}\int_{\Omega}|u(t,x)|^{2}dx=0; \quad
\frac{d}{dt}\Big(\int_{\Omega}|\nabla
u(t,x)|^{2}dx+\int_{\Omega}V(u(t,x))dx\Big)=0
\end{equation}
and therefore for a large class of potentials $V$ the quantity
$\|u(t,.)\|_{H^{1}_{0}(\Omega)}$ remains finite  along the
trajectory starting from $u_{0}\in H^{1}_{0}(\Omega)\bigcap
H^{2}(\Omega)$. This fact makes the study of \eqref{nls} in the
energy space $H^{1}_{0}(\Omega)$ of particular interest. It is
also of interest to study \eqref{nls} in $L^{2}(\Omega)$: the main
issue in the analysis is that the regularities of $H^{1}$ and
$L^{2}$ are a priori too poor to be achieved by the classical
methods for establishing local existence and uniqueness for
\eqref{nls}. We state the result concerning finite energy
solutions, which will be a consequence of Theorem \ref{thm2}.
\begin{thm}\label{thm3}(Global existence theorem)
Let $\Omega=\mathbb{R}^{3}\setminus B(0,1)$. For any $u_{0}\in
H_{0}^{1}(\Omega)$ the initial boundary value problem \eqref{nls}
has a unique global solution $u\in
C(\mathbb{R},H_{0}^{1}(\Omega))$ satisfying the conservation laws
\eqref{conserv}. Moreover, for any $T>0$ the flow map
$u_{0}\rightarrow u$ is Lipschitz continuous from any bounded set
of $H_{0}^{1}(\Omega)$ to $C(\mathbb{R},H_{0}^{1}(\Omega))$.
\end{thm}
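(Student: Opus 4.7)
The plan is to run a Kato-type fixed-point argument on the Duhamel formulation
\[
u(t) = e^{it\Delta_D}u_0 - i\int_0^t e^{i(t-s)\Delta_D}F(u(s))\,ds,
\]
in a Strichartz-type space built from Theorem~\ref{thm2} at the $H^1$ level, and then use energy conservation to extend the local solution to all times. The growth hypotheses on $V$ make $F$ cubic-like, $|F(u)|\lesssim|u|^3+|u|$, so \eqref{nls} is energy-subcritical in dimension three, and a single a priori bound on $\|u(t)\|_{H^1_0}$ is enough to globalise.

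First I would select an admissible pair $(p,q)$ in $d=3$ with $p$ large enough that $\sigma:=1-\tfrac{4}{5p}-\epsilon$ is positive and close to~$1$, and work in the space
\[
X_T=C([-T,T];H^1_0(\Omega))\cap L^p([-T,T];W^{\sigma,q}(\Omega)).
\]
Applying Theorem~\ref{thm2} to $(1-\Delta_D)^{\sigma/2}u_0\in H^{4/(5p)+\epsilon}(\Omega)$, whose norm is controlled by $\|u_0\|_{H^1_0}$ since $\sigma+\tfrac{4}{5p}+\epsilon=1$, gives the linear estimate $\|e^{it\Delta_D}u_0\|_{L^p_TW^{\sigma,q}}\lesssim\|u_0\|_{H^1_0}$, and a dual Strichartz estimate bounds the inhomogeneous term by $\|F(u)\|_{L^{p'}_TW^{\sigma,q'}}$. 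For the nonlinear side, the growth bound $|D^kV|\lesssim(1+|z|)^{4-k}$ yields $|F(u)-F(v)|\lesssim(|u|^2+|v|^2)|u-v|$ together with a chain rule for the fractional derivative of $F$; combining Hölder in time, Hölder in space, and the Sobolev embedding $W^{\sigma,q}(\Omega)\hookrightarrow L^r(\Omega)$ for a suitable~$r$, I obtain a nonlinear estimate of the form
\[
\|F(u)-F(v)\|_{L^{p'}_TW^{\sigma,q'}}\lesssim T^\theta\bigl(\|u\|_{X_T}^2+\|v\|_{X_T}^2\bigr)\|u-v\|_{X_T}
\]
for some $\theta>0$. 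Thus the Duhamel map is a contraction on a ball of $X_T$ as soon as $T$ is small compared to a negative power of $\|u_0\|_{H^1_0}$, yielding local existence, uniqueness and Lipschitz dependence on $[-T^\ast,T^\ast]$ with $T^\ast=T^\ast(\|u_0\|_{H^1_0})$.

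Globalisation uses the conservation laws \eqref{conserv}. The second law, together with the growth hypothesis $V(z)\geq-(1+|z|)^\beta$ for some $\beta<\tfrac{10}{3}=2+\tfrac{4}{d}$ and the Gagliardo--Nirenberg inequality on $\Omega$, turns energy conservation into a uniform a priori bound on $\|u(t)\|_{H^1_0}$. Since $T^\ast$ depends only on this norm, the local solution extends to any time interval, and by chaining the Lipschitz estimate across a finite partition of $[-T,T]$ into subintervals of length~$T^\ast$, the solution map is Lipschitz continuous from bounded sets of $H^1_0(\Omega)$ into $C([-T,T];H^1_0(\Omega))$.

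The main technical point is closing the nonlinear contraction in spite of the Strichartz loss of $\tfrac{4}{5p}+\epsilon$ derivatives: one needs $\sigma=1-\tfrac{4}{5p}-\epsilon$ to be strictly positive and $W^{\sigma,q}(\Omega)$ to embed into a Lebesgue space that accommodates a cubic nonlinearity on Hölder exponents that still leave a positive time factor. Because $\tfrac{4}{5p}$ decreases as $p$ grows, choosing $p$ large simultaneously pushes $\sigma$ close to one and opens the integrability margin $T^\theta$. This quantitative improvement over the Strichartz estimates used in~\cite{bgt03} is precisely what permits the full class of quartic potentials in the hypothesis of the theorem.
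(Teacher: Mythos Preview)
Your overall strategy—contraction mapping in a Strichartz-augmented space, then globalisation via the conservation laws and Gagliardo--Nirenberg—is exactly that of the paper. But the concrete implementation you sketch does not close, and in fact the paper makes the \emph{opposite} choice of~$p$.

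You propose taking $p$ large so that $\sigma=1-\tfrac{4}{5p}-\epsilon$ is close to~$1$. Admissibility in $d=3$ gives $\tfrac{1}{q}=\tfrac{1}{2}-\tfrac{2}{3p}$, and the Sobolev exponent of $W^{\sigma,q}$ is
\[
\frac{1}{r}=\frac{1}{q}-\frac{\sigma}{3}=\frac{1}{6}-\frac{2}{5p}.
\]
Thus for large $p$ your auxiliary space embeds into nothing better than $L^p_TL^r$ with $r$ close to $6$, which is already dominated by $L^\infty_TH^1_0\hookrightarrow L^\infty_TL^6$; you gain nothing over the bare energy norm and cannot close a cubic estimate in three dimensions. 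The paper instead takes $2<p<\tfrac{12}{5}$, precisely the range in which $\tfrac{1}{r}\le 0$, i.e.\ $\sigma>\tfrac{3}{q}$, so that $W^{\sigma,q}(\Omega)\hookrightarrow L^\infty(\Omega)$. This $L^\infty$ control of $u$ is the whole point of the improved loss $\tfrac{4}{5p}<\tfrac{1}{p}$ and is what makes the cubic nonlinearity tractable.

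A second gap: Theorem~\ref{thm2} is a \emph{homogeneous} estimate with derivative loss, and no inhomogeneous/dual estimate of the type $\|\int_0^t e^{i(t-s)\Delta_D}g\,ds\|_{L^p_TW^{\sigma,q}}\lesssim\|g\|_{L^{p'}_TW^{\sigma,q'}}$ is available in this setting. The paper handles the Duhamel term by Minkowski plus the homogeneous estimate, placing $F(u)$ in $L^1_TH^1_0$:
\[
\Bigl\|(1-\Delta_D)^{\sigma/2}\!\int_0^t e^{i(t-s)\Delta_D}F(u)\,ds\Bigr\|_{L^p_TL^q}
\lesssim\int_0^T\|F(u(s))\|_{H^1_0}\,ds
\lesssim T^{1-2/p}\|u\|_{L^\infty_TH^1_0}\|u\|_{L^p_TL^\infty}^2+T\|u\|_{L^\infty_TH^1_0}.
\]
No fractional Leibniz rule is needed—only the pointwise bound $|\nabla F(u)|\lesssim(1+|u|^2)|\nabla u|$—and the positive power of $T$ comes for free once $p>2$.
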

\begin{rmq}
In \cite{bgt03}, Strichartz type estimates with loss of
$\frac{1}{p}$ derivative have been obtained for the Schrödinger
equation in the exterior of a non-trapping obstacle
$\Theta\subset\mathbb{R}^{d}$ which allowed to prove the same
global existence result in dimension $3$ provided $\|u_{0}\|_{H_{0}^{1}(\Omega)}$
is sufficiently small.
\end{rmq}
The Cauchy problem associated to \eqref{nls} has been extensively
studied in the case $\Omega=\mathbb{R}^{d}$, $d\geq 2$, by Bourgain
\cite{Bourg}, Cazenave \cite{Ca}, Sulem et Sulem \cite{Sul},
Ginibre and Velo \cite{Gi}, Kato \cite{Ka} and the theory of
existence of finite energy solutions to \eqref{nls} for potentials
$V$ with polynomial growth has been much developed. Roughly
speaking the argument for establishing finite energy solutions of
\eqref{nls} consists in combining $H^{1}$ local well-posedness
with conservation laws \eqref{conserv} which provide a control on
the $H^{1}$ norm.

The article is written as follows: in Section \ref{smef} we obtain bounds for the $L^{2}$ norms for the outgoing solution of the Helmholtz equation that will be used in Section \ref{secthm1} in order to prove Theorem \ref{thm1}. In Section \ref{away} we use a strategy inspired from \cite{stta02}, \cite{bgt03}
to handle the case when the solution is supported outside a neighborhood of
$\partial\Omega$; in Section \ref{secthm2} we achieve the proof of Theorem
\ref{thm2}. The last section is dedicated to the applications of these results; precisely, we give the proofs of
Theorems \ref{thm2} and \ref{thm3} in the case where the obstacle consists of a
union of balls. In the Appendix we recall some properties of the Hankel functions.

\textbf{Acknowledgments}: \emph{This result is part of the
author's PhD thesis in preparation at University Paris Sud, Orsay,
under Nicolas Burq's direction.}

\section{Precise smoothing effect}\label{smef}
\subsection{Preliminaries}
Let $\Omega\subset\mathbb{R}^{d}$, $d\geq2$ be a smooth domain .
For $s\geq0$, $p\in [1,\infty]$ we denote by $W^{s,p}(\Omega)$ the
Sobolev spaces on $\Omega$. We write $L^{p}$ and $H^{s}$ instead
of $W^{0,p}$ and $W^{s,2}$.
%\[
%W^{s,p}(\Omega)=\{u|\exists\bar{u}\in W^{s,p}(\mathbb{R}^{d})\quad
%t.q.\quad \bar{u}|_{\Omega}=u\}.
%\]
%For $s\in\mathbb{Z}$ the norm in
%$W^{s,p}(\Omega)$ can be expressed in an explicit way while for
%non-integer $s$ one can define the spaces $W^{s,p}(\Omega)$ by
%suitable interpolation (see \cite{Be}).
By $\Delta_{D}$ (resp. $\Delta=\Delta_{N}$) we denote the
Dirichlet Laplacian (resp. Neumann Laplacian) on $\Omega$, with
domain $H^{2}(\Omega)\cap H^{1}_{0}(\Omega)$ (resp. $\{u\in
H^{2}(\Omega)| \partial_{n}u|_{\partial\Omega}=0\}$). We next
define the dual space of $H^{1}_{0}(\Omega)$, $H^{-1}(\Omega)$,
which is a subspace of $D'(\Omega)$. We construct $H^{-s}(\Omega)$
via interpolation and due to \cite[Cor.4.5.2]{Be} we have the
duality between $H_{0}^{s}(\Omega)$ and $H^{-s}(\Omega)$ for $s\in
[0,1]$.
\begin{prop}\label{prop1}
\begin{itemize}
Let $d\geq2$ and $\Omega\subset\mathbb{R}^{d}$ be a smooth domain.
The following continuous embeddings hold:
\item  $H^{1}_{0}(\Omega)\subset L^{q}(\Omega)$, \quad $2\leq q\leq
\frac{2d}{d-2}$ ($p<\infty$ if $d=2$),
\item $H^{s}_{D}(\Omega)\subset L^{q}(\Omega)$, \quad
$\frac{1}{2}-\frac{1}{q}=\frac{s}{d}$, $s\in [0,1),$
\item$H_{D}^{s+1}(\Omega)\subset W^{1,q}(\Omega)$, \quad
$\frac{1}{2}-\frac{1}{q}=\frac{s}{d}$, $s\in [0,1)$,
\item$W^{s,p}(\Omega)\subset L^{\infty}(\Omega)$, \quad
$s>\frac{d}{p}$, $p\geq1$.
\end{itemize}
\end{prop}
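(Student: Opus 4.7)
The plan is to treat items (1) and (4) as classical Sobolev embeddings on smooth domains, and then obtain the fractional-order items (2) and (3) by complex interpolation between $L^2(\Omega)$ and $H^1_0(\Omega)$.

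First I would dispatch item (1): since $u\in H^1_0(\Omega)$ extends by zero to an element of $H^1(\mathbb{R}^d)$ with an equivalent norm, the Gagliardo--Nirenberg--Sobolev inequality gives $u\in L^{2d/(d-2)}(\mathbb{R}^d)$ (or $u\in L^q(\mathbb{R}^2)$ for every $q<\infty$ when $d=2$), so the restriction to $\Omega$ lies in $L^{2d/(d-2)}(\Omega)$; the intermediate exponents $2\leq q\leq 2d/(d-2)$ then follow from H\"older interpolation with $L^2(\Omega)$. For item (4), I would use a bounded extension operator $W^{s,p}(\Omega)\to W^{s,p}(\mathbb{R}^d)$ (available since $\Omega$ is smooth) together with the classical Morrey embedding $W^{s,p}(\mathbb{R}^d)\subset L^\infty(\mathbb{R}^d)$ valid for $s>d/p$.

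For item (2), I would set up complex interpolation. By the spectral definition, $H^s_D(\Omega)$ is the domain of $(1-\Delta_D)^{s/2}$, which coincides (for $s\in[0,1]$) with the complex interpolate $[L^2(\Omega),H^1_0(\Omega)]_s$. Since the identity operator maps $L^2(\Omega)$ into $L^2(\Omega)$ trivially and $H^1_0(\Omega)$ into $L^{2d/(d-2)}(\Omega)$ by item (1), the Stein--Weiss interpolation theorem produces a continuous embedding $H^s_D(\Omega)\subset L^{q_s}(\Omega)$ with
\begin{equation*}
\frac{1}{q_s}=\frac{1-s}{2}+\frac{s(d-2)}{2d}=\frac{1}{2}-\frac{s}{d},
\end{equation*}
which is precisely the asserted relation.

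Item (3) is the step where boundary conditions could conceivably cause trouble, and is the place I would spend the most care. My approach is to show that on a smooth $\Omega$ and for $s\in[0,1)$, the norm $\|(1-\Delta_D)^{(s+1)/2}u\|_{L^2}$ controls the full classical norm $\|u\|_{H^{s+1}(\Omega)}$, so $H^{s+1}_D(\Omega)$ embeds continuously into $H^{s+1}(\Omega)$; this is standard elliptic regularity for $s+1\leq 2$. Once we know $u\in H^{s+1}(\Omega)$, each partial derivative $\partial_j u$ belongs to $H^s(\Omega)$, and the classical (non-Dirichlet) Sobolev embedding yields $\partial_j u\in L^q(\Omega)$ with the stated exponent. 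Combined with the bound on $u$ itself from item (2), this gives $u\in W^{1,q}(\Omega)$. The main obstacle here is justifying the equivalence of the spectrally-defined norm and the classical $H^{s+1}$ norm uniformly across $s\in[0,1)$; the delicate point is the threshold $s+1=3/2$ where extra boundary traces appear, but since we only need a continuous embedding (not an isomorphism) in that direction, no trace hypotheses are required and the argument goes through.
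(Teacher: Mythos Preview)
Your proposal is correct and follows essentially the same route as the paper: reduce to the known Sobolev embeddings on $\mathbb{R}^d$ via extension (by zero for $H^1_0$, by a bounded extension operator otherwise), with interpolation to handle the fractional orders. The paper's own proof is in fact a single sentence (``follows from the Sobolev embeddings on $\mathbb{R}^d$ and the use of extension operators''), so your write-up is a considerably more detailed execution of the same idea rather than a different strategy.
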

\begin{proof}
The proof follows from the Sobolev embeddings on $\mathbb{R}^{d}$
and the use of extension operators.
\end{proof}
Let $\psi,\chi\in C^{\infty}_{0}(\mathbb{R}^{*})$ be smooth
functions such that $\chi=1$ in a neighborhood of $0$ and for all $\tau\in\mathbb{R}_{+}$, $\sum_{k\geq 0}\psi(2^{-2k}\tau)=1$. For
$\lambda>0$ let $\chi_{\lambda}(|x|):=\chi(\lambda^{\alpha}(|x|-1))$, where $x$ denotes variable on
$\Omega$. We introduce spectral localizations which commute with
the linear evolution. Since the spectrum of $-\Delta_{D}$ is confined
to the positive real axis it is convenient to introduce
$\lambda^{2}$ as a spectral parameter. We will consider the linear
problem \eqref{LS} with initial data localized at frequency $\lambda$,
$u|_{t=0}=\psi(-\frac{\Delta_{D}}{\lambda^{2}})u_{0}$.
\begin{rmq}
In order to prove Theorem \ref{thm2} it will be enough to prove \eqref{stri} with the initial data of the
form $\psi(-\frac{\Delta}{\lambda^{2}})u_{0}$, since than we have for
some $\epsilon$ arbitrarily small
\begin{equation}
\|e^{it\Delta_{D}}u_{0}\|_{L^{p}([-T,T],L^{q}(\Omega))}\thickapprox\|(e^{it\Delta_{D}}
\psi(-\frac{\Delta_{D}}{2^{2j}})u_{0})_{j}\|_
{L^{p}([-T,T],L^{q}(\Omega))l^{2}_{j}}\leq
\end{equation}
\[
\leq\|(e^{it\Delta_{D}}
\psi(-\frac{\Delta_{D}}{2^{2j}})u_{0})_{j}\|_
{l^{2}_{j}L^{p}([-T,T],L^{q}(\Omega))}\leq
%\Big(\sum_{j}2^{\frac{4j}{5p}}\|\psi(\frac{-\Delta_{D}}{2^{2j}})u_{0}\|_{L^{2}
%(\Omega)}\Big)^{\frac{1}{2}}\leq
\Big(\sum_{j}
2^{2j(\frac{4}{5p}+\epsilon)}\|\psi(-\frac{\Delta_{D}}{2^{2j}})u_{0}\|^{2}_{L^{2}
(\Omega)}\Big)^{\frac{1}{2}}\thickapprox
\|u_{0}\|_{H^{\frac{4}{5p}+\epsilon}_{D}(\Omega)}.
\]
\end{rmq}

\subsection{Estimates in a small neighborhood of the boundary}
In what follows let $d=3$. We study first the outgoing
solution to the equation
\begin{equation}\label{2d}
(\Delta_{D} +\lambda^{2})w=
\chi_{\lambda}f,\quad w|_{\partial\Omega}=0.
\end{equation}
%We ask $w$ to be outgoing outside a ball
%$B(0,1+\lambda^{-\alpha})$, where $0<\alpha<\frac{2}{3}$ will be chosen
%later.
%\unitlength=0.18mm
%\begin{picture}(250,150)Fig.$1$.
%\put(70,70){\circle{55}} \put(70,70){\circle{200}}
%\put(0,98){\vector(1,0){30}} \put(30,98){\vector(1,0){120}}
%\put(-5,105){{\scriptsize$(b)$}}
%\put(70,145){\vector(0,-1){47}}\put(50,135){{\scriptsize $(a)$}}
%\put(50,65){{\scriptsize $\lambda^{-\frac{\alpha}{2}}$}}
%\put(105,100){{\scriptsize $\lambda^{-\alpha}$}}
%\put(23,145){\vector(1,-1){47}} \put(70,98){\vector(1,1){45}}
%\linethickness{0.05mm} \multiput(70,109)(1,0){8}{\line(1,0){21}}
%\put(70,85){\vector(1,0){29}}\put(70,85){\vector(-1,0){29}}
%\put(98,109){\line(0,-1){23}} \put(42,98){\line(0,-1){12}}
%\put(130,50){\vector(-3,2){25}}\put(140,50){{\scriptsize
%$\Omega_{\lambda}\supset supp(\chi_{\lambda})$}}
%\end{picture}
%\begin{rmq}
In what follows we will establish high frequencies bounds for the
$L^{2}$ norm of $w$ in a small neighborhood
$\Omega_{\lambda}=\{|x|\lesssim\lambda^{-\alpha}\}$ of size $\lambda^{-\alpha}$ of the boundary. We notice that a
ray with transversal, equal-angle reflection spends in the
neighborhood $\Omega_{\lambda}$ a time $\simeq\lambda^{-\alpha}$. If the
ray is diffractive then the time spent in $\Omega_{\lambda}$
equals $\lambda^{-\frac{\alpha}{2}}$. 
%\end{rmq}

We analyze the outgoing solution of \eqref{2d} outside the unit ball of $\mathbb{R}^{3}$.  The first step is to introduce polar coordinates and to write the expansion in spherical harmonics of the solution to 
\begin{equation}\label{helmhom}
\left\{
             \begin{array}{ll}
                (\Delta_{D}+\lambda^{2})\tilde{w}=0 \quad \text{on}\quad \Omega=\{x\in\mathbb{R}^{3}| |x|>1\},\\
               \tilde{w}|_{\partial\Omega}=\tilde{f}\quad \text{on} \quad\ \mathbb{S}^{2},\\
               r(\partial_{r}w-i\lambda w)\rightarrow_{r\rightarrow\infty}0.\\
                \end{array}
                \right.
\end{equation}
In this coordinates the Laplace operator on $\Omega$ writes
\begin{equation}
\Delta_{D}=\partial^{2}_{r}+\frac{2}{r}\partial_{r}+\frac{1}{r^{2}}\Delta_{\mathbb{S}^{2}},
\end{equation}
where $\Delta_{\mathbb{S}^{2}}$ is the Laplace operator on the sphere $\mathbb{S}^{2}$. Thus the solution $\tilde{w}$ of \eqref{helmhom} satisfies
\begin{equation}
r^{2}\partial^{2}_{r}\tilde{w}+2r\partial_{r}\tilde{w}+(\lambda^{2}r^{2}+\Delta_{\mathbb{S}^{2}})\tilde{w}=0,\quad r>1.
\end{equation}
In particular, if $\{e_{j}\}$ is an orthonormal basis of $L^{2}(\mathbb{S}^{2})$ consisting of eigenfunctions of $\Delta_{\mathbb{S}^{2}}$, with eigenvalues $-\mu^{2}_{j}$ and if $\omega$ denotes the variable on the sphere $\mathbb{S}^{2}$, we can write
\begin{equation}
\tilde{w}(r\omega)=\sum_{j}\tilde{w}_{j}(r)e_{j}(\omega),\quad r\geq 1,
\end{equation}
where the functions $\tilde{w}_{j}(r)$ satisfy
\begin{equation}
r^{2}\tilde{w}''_{j}(r)+2r\tilde{w}'_{j}(r)+(\lambda^{2}r^{2}-\mu^{2}_{j})\tilde{w}_{j}(r)=0,\quad r>1.
\end{equation}
This is a modified Bessel equation, and the solution satisfying the radiation condition $ r(\partial_{r}w-i\lambda w)\rightarrow_{r\rightarrow\infty}0$ is of the form
\begin{equation}\label{solhelmhom}
\tilde{w}_{j}(r)=a_{j}r^{-\frac{1}{2}}H_{\nu_{j}}(\lambda r),
\end{equation}
where $H_{\nu_{j}}(z)$ denote the Hankel function. Recall that the Hankel function is given by
\begin{equation}
H_{\nu}(z)=(\frac{2}{\pi z})^{1/2}\frac{e^{i(z-\pi\nu/2-\pi/4)}}{\Gamma(\nu+1/2)}\int_{0}^{\infty}e^{-s}s^{\nu-1/2}(1-\frac{s}{2iz})^{\nu-1/2}ds, 
\end{equation}
where
\[
\Gamma(\nu+1/2)=\int_{0}^{\infty}e^{-s}s^{\nu-1/2}ds,
\]
and it is valid for $Re\nu>\frac{1}{2}$ and $-\pi/2<\arg z<\pi$. Also, in \eqref{solhelmhom} $\nu_{j}$ is given by
\begin{equation}
\nu_{j}=(\mu^{2}_{j}+\frac{1}{4})^{1/2}
\end{equation}
and the coefficients $a_{j}$ are determined by the boundary condition $\tilde{w}_{j}(1)=<\tilde{f},e_{j}>$, so
\begin{equation}
a_{j}=\frac{<\tilde{f},e_{j}>}{H_{\nu_{j}}(\lambda)}.
\end{equation}
Let us introduce the self-adjoint operator
\begin{equation}
A=(-\Delta_{\mathbb{S}^{2}}+\frac{1}{4})^{1/2},\quad Ae_{j}=\nu_{j}e_{j}.
\end{equation}
Then the solution of \eqref{helmhom} writes, formally,
\begin{equation}\label{tildewro}
\tilde{w}(r\omega)=r^{-1/2}\frac{H_{A}(\lambda r)}{H_{A}(\lambda)}\tilde{f}(\omega), \quad \omega\in\mathbb{S}^{2}.
\end{equation}
\begin{prop}(see \cite[Chp.3]{tay96})
The spectrum of $A$ is $spec(A)=\{m+\frac{1}{2}|m\in\mathbb{N}\}$.
\end{prop}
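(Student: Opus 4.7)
The plan is to reduce the claim to the classical computation of the spectrum of the Laplace--Beltrami operator $-\Delta_{\mathbb{S}^2}$ on the $2$-sphere, then apply the spectral theorem to the self-adjoint operator $(-\Delta_{\mathbb{S}^2} + 1/4)^{1/2}$.

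First I would invoke the standard fact (see Taylor, PDE Vol.~I, Ch.~3, which is exactly the reference cited) that the eigenvalues of $-\Delta_{\mathbb{S}^2}$ are $\mu_\ell^2 = \ell(\ell+1)$ for $\ell \in \mathbb{N}$, with eigenspaces spanned by the spherical harmonics of degree $\ell$ (the restrictions to $\mathbb{S}^2$ of homogeneous harmonic polynomials on $\mathbb{R}^3$ of degree $\ell$). These spherical harmonics form a complete orthonormal basis $\{e_j\}$ of $L^2(\mathbb{S}^2)$, so $-\Delta_{\mathbb{S}^2}$ is a positive self-adjoint operator with pure point spectrum $\{\ell(\ell+1) : \ell \in \mathbb{N}\}$.

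Next I would simply compute: for each eigenvalue $\mu^2 = \ell(\ell+1)$ of $-\Delta_{\mathbb{S}^2}$,
\begin{equation*}
\mu^2 + \tfrac{1}{4} = \ell(\ell+1) + \tfrac{1}{4} = \ell^2 + \ell + \tfrac{1}{4} = \bigl(\ell + \tfrac{1}{2}\bigr)^2,
\end{equation*}
so the positive operator $-\Delta_{\mathbb{S}^2} + \tfrac{1}{4}$ has eigenvalues $(\ell + 1/2)^2$, sharing the same eigenbasis. Since $A$ is defined via the functional calculus as the positive square root, it is self-adjoint with the same eigenfunctions and eigenvalues $\nu_\ell = \ell + 1/2$. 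Completeness of the spherical harmonic basis ensures this exhausts the spectrum, giving $\mathrm{spec}(A) = \{m + 1/2 : m \in \mathbb{N}\}$.

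There is no real obstacle here; the only non-trivial ingredient is the classical identification of the eigenvalues of $-\Delta_{\mathbb{S}^2}$, which is the content of the Taylor reference. If one wished to be self-contained, one could briefly justify $\mu_\ell^2 = \ell(\ell+1)$ by writing an order-$\ell$ homogeneous harmonic polynomial $P_\ell$ on $\mathbb{R}^3$, using $\Delta_{\mathbb{R}^3} = \partial_r^2 + \frac{2}{r}\partial_r + \frac{1}{r^2}\Delta_{\mathbb{S}^2}$ together with the Euler identity $r\partial_r P_\ell = \ell P_\ell$, and solving $\Delta_{\mathbb{R}^3} P_\ell = 0$ to obtain $\Delta_{\mathbb{S}^2}(P_\ell|_{\mathbb{S}^2}) = -\ell(\ell+1) P_\ell|_{\mathbb{S}^2}$.
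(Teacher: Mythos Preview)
Your argument is correct and is exactly the standard computation one finds in the cited reference. The paper itself gives no proof of this proposition---it merely states the result with a citation to Taylor---so there is nothing to compare against beyond noting that your reduction to the eigenvalues $\ell(\ell+1)$ of $-\Delta_{\mathbb{S}^2}$ and the algebraic identity $\ell(\ell+1)+\tfrac{1}{4}=(\ell+\tfrac{1}{2})^2$ is precisely the intended justification.
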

\begin{rmq}\label{prophank}(see \cite[Chp.3]{tay96})
The Hankel function $H_{m+1/2}(z)$ and Bessel functions of order $m+\frac{1}{2}$ are all elementary functions of $z$. We have 
\begin{equation}
H_{m+1/2}(z)=(\frac{2z}{\pi})^{1/2}q_{m}(z),\quad q_{m}(z)=-i(-1)^{m}(\frac{1}{z}\frac{d}{dz})^{m}(\frac{e^{iz}}{z}).
\end{equation}
We deduce that
\begin{equation}
r^{-1/2}\frac{H_{m+1/2}(\lambda r)}{H_{m+1/2}(\lambda)}=r^{-m-1}e^{i\lambda(r-1)}\frac{p_{m}(\lambda r)}{p_{m}(\lambda)},\quad p_{m}(z)=i^{-m-1}\sum_{k=0}^{m}(i/2)^{k}\frac{(m+k)!}{k!(m-k)!}z^{m-k}.
\end{equation}
\end{rmq}
We now look for a solution to \eqref{2d}. If we write
\[
f(r\omega)=\sum_{j}f_{j}(r)e_{j}(\omega),\quad w(r\omega)=\sum_{j}w_{j}(r)e_{j}(\omega),
\]
then the functions $w_{j}(r)$ satisfy
\begin{equation}\label{wj}
r^{2}w''_{j}(r)+2rw'_{j}(r)+(\lambda^{2}r^{2}-\mu^{2}_{j})w_{j}(r)=r^{2}f_{j}(r),\quad r>1
\end{equation}
together with the vanishing condition at $r=1$ and Sommerfield radiation condition when $r\rightarrow\infty$.
Applying the variation of constants method together with the outgoing assumption and the formula
 \eqref{tildewro}, we obtain
\begin{equation}\label{solj}
w_{j}(r)=\int_{0}^{\infty}G_{\nu_{j}}(r,s,\lambda)\chi_{\lambda}(s)f_{j}(s)s^{2}ds,\quad \nu_{j}=(\mu^{2}_{j}+\frac{1}{4})^{1/2},
\end{equation}
where $G_{\nu}(r,s,\lambda)$ is the Green kernel for the differential operator
\begin{equation}
L_{\nu}=\frac{d^{2}}{dr^{2}}+\frac{2}{r}\frac{d}{dr}+(\lambda^{2}-\frac{\mu^{2}}{r^{2}}), \quad \nu=(\mu^{2}+\frac{1}{4})^{1/2}.
\end{equation}
The fact that $L_{\nu}$ is self-adjoint implies that $G_{\nu}(r,s,\lambda)=G_{\nu}(s,r,\lambda)$ and from the radiation condition we obtain (for $\lambda$ real)
\begin{equation}\label{helmhoms}
G_{\nu}(r,s,\lambda)=
\left\{
             \begin{array}{ll}
               \frac{\pi}{2i}(rs)^{-1/2}\Big(J_{\nu}(s\lambda)-\frac{J_{\nu}(\lambda)}{H_{\nu}(\lambda)}H_{\nu}(s\lambda)\Big)H_{\nu}(r\lambda),\quad r\geq s, \\
               \frac{\pi}{2i}(rs)^{-1/2}\Big(J_{\nu}(r\lambda)-\frac{J_{\nu}(\lambda)}{H_{\nu}(\lambda)}H_{\nu}(r\lambda)\Big)H_{\nu}(s\lambda),\quad r\leq s. \\
                \end{array}
                \right.
\end{equation}
In what follows we will look for estimates of the $L^{2}$ norm of
$w_{j}$ on the interval $[1,1+\lambda^{-\alpha}]$. This problem
has to be divided in several classes, according whether
$\nu_{j}/\lambda$ is less than, nearly equal or greater than $1$. We
distinguish also the simple case of determining bounds when the
argument is much larger than the order. 
Let us explain the meaning of this: in fact, applying the operator $L_{\nu_{j}}$ to $rw_{j}(r)$ instead of $w_{j}(r)$, we eliminate the term involving $w'_{j}(r)$ in \eqref{wj}. On the characteristic set we have
\begin{equation}\label{chv}
\rho^{2}_{j}+\frac{\mu^{2}_{j}}{r^{2}}=\lambda^{2}, \quad \nu_{j}=(\mu^{2}_{j}+\frac{1}{4})^{1/2},
\end{equation}
where $\rho_{j}$ denotes the dual variable of $r$. Let $\theta_{j}$ be defined by $\tan\theta_{j}=\frac{\rho_{j}}{\mu_{j}}$, then for $\lambda$ big enough and $r$ in a small neighborhood of $1$ we can estimate 
\[
\tan^{2}\theta_{j}+1\simeq\frac{\lambda^{2}}{\nu^{2}_{j}}.
\]
\begin{itemize}
\item When the quotient $\frac{\nu_{j}}{\lambda}$ is smaller than a constant $1-\epsilon_{0}$ where $\epsilon_{0}$ is fixed, strictly positive, this corresponds to an angle $\theta_{j}$ between some fixed direction $\theta_{0}$ and $\pi/2$, with $\tan\theta_{0}=\epsilon_{0}$, and thus to a ray hitting the obstacle transversally. In this case we show that, since a unit speed 
bicharacteristic spends in the $\lambda$-depending neighborhood $\Omega_{\lambda}$ a time $\lambda^{-\alpha}$, we have the following
\begin{prop}\label{propba}
Let $\epsilon_{0}>0$ be fixed, small. Then there exists a constant $C=C(\epsilon_{0})$ such that 
\[
\|w_{j}\|_{L^{2}([1,1+\lambda^{-\alpha}])}\leq C\lambda^{-(1+\alpha)}\|f_{j}\|_{L^{2}([1,1+\lambda^{-\alpha}])}
\] 
uniformly for $j$ such that $\{\nu_{j}/\lambda\leq 1-\epsilon_{0}\}$. 
\end{prop}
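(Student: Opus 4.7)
The plan is to reduce the estimate to a uniform pointwise bound on the Green kernel $G_{\nu_j}(r,s,\lambda)$ given by \eqref{helmhoms} and then close with a Schur-type argument that produces the $\lambda^{-(1+\alpha)}$ factor. In the regime $\nu_j \leq (1-\epsilon_0)\lambda$, both the arguments $\lambda$ and $\lambda r$ (with $r \in [1,1+\lambda^{-\alpha}]$) exceed the order $\nu_j$ by a multiplicative factor bounded away from one, so $J_{\nu_j}$ and $H_{\nu_j}$ sit in the purely oscillatory Debye regime. My first step is to extract from the Debye asymptotic the pointwise amplitude bounds
\[
|H_{\nu_j}(\lambda r)|,\ |J_{\nu_j}(\lambda r)| \leq C_{\epsilon_0}\bigl((\lambda r)^{2}-\nu_j^{2}\bigr)^{-1/4} \leq C'_{\epsilon_0}\lambda^{-1/2},
\]
together with the matching lower bound $|H_{\nu_j}(\lambda)|^{2} = J_{\nu_j}(\lambda)^{2} + Y_{\nu_j}(\lambda)^{2} \geq c_{\epsilon_0}\lambda^{-1}$, which follows from the corresponding Debye expansion of $Y_\nu$.

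Combining these ingredients in \eqref{helmhoms} and using the trivial inequality $|J_{\nu_j}(\lambda)/H_{\nu_j}(\lambda)| \leq 1$ gives the key uniform estimate
\[
|G_{\nu_j}(r,s,\lambda)| \leq C_{\epsilon_0}\,\lambda^{-1}, \qquad (r,s) \in [1,1+\lambda^{-\alpha}]^{2}.
\]
Heuristically, one of the $\lambda^{-1/2}$ factors comes from each of the oscillatory amplitudes of the two Bessel-type solutions appearing in the kernel, which is the quantitative manifestation of the fact that a unit-speed transversal bicharacteristic only spends time $\lambda^{-\alpha}$ in the boundary layer.

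Once this pointwise bound is in hand, the rest is a Schur/Cauchy--Schwarz argument. Since $\chi_\lambda$ restricts the $s$-integration in \eqref{solj} to an interval of length $\lesssim \lambda^{-\alpha}$ and $s^{2}$ is bounded there, applying Cauchy--Schwarz in $s$ yields $|w_j(r)| \leq C_{\epsilon_0}\lambda^{-1}\lambda^{-\alpha/2}\|f_j\|_{L^{2}([1,1+\lambda^{-\alpha}])}$, and a further integration in $r$ over the same interval produces the additional $\lambda^{-\alpha/2}$ needed to obtain the claimed $\lambda^{-(1+\alpha)}$.

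The step I expect to be the main obstacle is justifying the Debye-type amplitude and lower bounds uniformly in the whole range $\nu_j/\lambda \in [0, 1-\epsilon_0]$. For $\nu_j$ close to $(1-\epsilon_0)\lambda$ the Debye phase varies slowly and one must control the factor $((\lambda r)^{2}-\nu_j^{2})^{-1/4}$, which is only bounded thanks to the spectral gap $\epsilon_0$ (hence the $\epsilon_0$-dependence of the constant), whereas for small $\nu_j$ one essentially falls back on the classical large-argument asymptotics of $J_\nu$ and $H_\nu$ (also visible through the explicit formula in Remark \ref{prophank} when $\nu_j = m+\tfrac{1}{2}$). Patching these two sub-regimes into a single uniform statement is the technical heart of the argument.
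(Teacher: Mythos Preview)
Your proposal is correct and follows essentially the same route as the paper: both arguments reduce to Cauchy--Schwarz on the Green kernel \eqref{helmhoms} after invoking the oscillatory (Debye/large-argument) amplitude bounds $|H_{\nu_j}(\lambda r)|,|J_{\nu_j}(\lambda r)|\lesssim_{\epsilon_0}\lambda^{-1/2}$ in the regime $\nu_j/\lambda\le 1-\epsilon_0$. The only cosmetic difference is that the paper packages the estimate as $\|w_j\|_{L^2}\le\|f_j\|_{L^2}\,\|H_{\nu_j}(\lambda r)\|_{L^2([1,1+\lambda^{-\alpha}])}^{2}$ and then computes this $L^2$ norm explicitly in the two sub-regimes you identify, whereas you go directly through the pointwise kernel bound $|G_{\nu_j}|\lesssim\lambda^{-1}$; your mention of the lower bound on $|H_{\nu_j}(\lambda)|$ is in fact unnecessary since $|J_\nu/H_\nu|\le 1$ already suffices.
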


\item When the quotient $\frac{\nu_{j}}{\lambda}$ is close enough to $1$ the angles $\theta_{j}$ become very small and this is the case of a diffractive ray, which spends in $\Omega_{\lambda}$ a time proportional to $\lambda^{-\alpha/2}$. In this case $\tan\theta_{j}\simeq \sqrt{1-\frac{\nu^{2}_{j}}{\lambda^{2}}} $ and we show the following

\begin{prop}\label{propsa}
If $1-\frac{\nu_{j}}{\lambda}\simeq\lambda^{-\beta}$ for some $\beta>0$ then 
\[
\|w_{j}\|_{L^{2}([1,1+\lambda^{-\alpha}])}\leq C\lambda^{-1-\alpha/2-(\alpha-\beta)}\|f_{j}\|_{L^{2}([1,1+\lambda^{-\alpha}])}\ if \ \beta\leq \alpha
\]
and 
\[
\|w_{j}\|_{L^{2}([1,1+\lambda^{-\alpha}])}\leq C\lambda^{-(1+\alpha/2)}\|f_{j}\|_{L^{2}([1,1+\lambda^{-\alpha}])}\ if \ \beta\geq \alpha.
\]
\end{prop}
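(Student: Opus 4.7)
The plan is to estimate $w_j$ directly from the Green kernel representation \eqref{solj} by reducing, via a Schur test on $[1,1+\lambda^{-\alpha}]$, the $L^2\to L^2$ bound to a pointwise control of $|G_{\nu_j}(r,s,\lambda)|$ integrated over one of the variables. Since $\nu_j \simeq \lambda(1-\lambda^{-\beta})$ places us in the transition regime for the Bessel and Hankel functions in \eqref{helmhoms} (argument comparable to order), the standard Debye expansions do not apply and the key input is the uniform Langer--Olver asymptotic expansion of $J_{\nu_j}(\lambda r)$ and $H_{\nu_j}^{(1)}(\lambda r)$ in terms of Airy functions of a rescaled variable $\zeta_r$, which under the hypothesis $1-\nu_j/\lambda \simeq \lambda^{-\beta}$ takes the form
\[
\zeta_r \;\simeq\; -\lambda^{2/3-\beta} \;-\; \lambda^{2/3}(r-1).
\]

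The two cases in the statement correspond precisely to which of the two terms in $\zeta_r$ dominates as $r$ ranges over $[1,1+\lambda^{-\alpha}]$. When $\beta \geq \alpha$, the first term dominates throughout the interval, so $\zeta_r$, $\zeta_s$ and $\zeta_1$ (obtained by setting $r=1$) are all of comparable size; the Airy factors in the Langer--Olver formula then conspire to give a uniform pointwise bound on $G_{\nu_j}(r,s,\lambda)$, and after integration over an interval of length $\lambda^{-\alpha}$ in $s$ Schur's test produces the target factor $\lambda^{-(1+\alpha/2)}$. When $\beta \leq \alpha$, on the contrary, $\zeta_r$ varies significantly on the interval and one splits the $s$-integration according to whether $\zeta_s$ remains comparable to $\zeta_1$ or enters a larger-argument range; on the latter subregion the Airy asymptotics $|\mathrm{Ai}(-x)|+|\mathrm{Bi}(-x)| \lesssim x^{-1/4}$ contribute an extra power-decay factor which produces the additional gain $\lambda^{-(\alpha-\beta)}$ in the estimate.

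The main technical obstacle is keeping precise track of the combination $J_{\nu_j}(s\lambda) - \frac{J_{\nu_j}(\lambda)}{H_{\nu_j}(\lambda)}H_{\nu_j}(s\lambda)$ appearing in \eqref{helmhoms}, which vanishes at $s=1$ by construction. In the transition regime the cancellation between its two terms is not visible from the pointwise sizes of $J_\nu$ and $H_\nu$ alone; it has to be read from Airy function identities together with the relation $H_\nu^{(1)}=J_\nu+iY_\nu$, and it is precisely this cancellation that recovers the small parameter $\lambda^{-\beta}$ crucial for the sharpness of the estimate. A secondary bookkeeping point is that the Langer--Olver prefactors $\left(4\zeta/(1-x^2)\right)^{1/4}$ for $x=\lambda r/\nu_j$ are of different orders at $r$, $s$ and at the boundary value $r=1$, and these three factors must be combined carefully so that the final pointwise bound on $G_{\nu_j}$ is stated in a form amenable to Schur integration. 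Once this pointwise Green kernel control is in place in each of the two regimes, the $L^2$ bound on $w_j$ follows from the Schur lemma applied on $[1,1+\lambda^{-\alpha}]$.
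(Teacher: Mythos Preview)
Your plan is correct and uses the same analytic input as the paper, namely the uniform Langer--Olver (Airy) asymptotics for $J_{\nu_j},Y_{\nu_j},H_{\nu_j}$ in the transition regime $\nu_j/\lambda\simeq 1-\lambda^{-\beta}$. The organization, however, is different. The paper does not run a Schur test on a pointwise kernel bound; instead it writes the cruder Cauchy--Schwarz type factorization
\[
\|w_j\|_{L^2}\;\lesssim\;\frac{\lambda^{-\alpha/2}}{|H_{\nu_j}(\lambda)|}\,\|H_{\nu_j}(\lambda r)\|_{L^2([1,1+\lambda^{-\alpha}])}\,\bigl\|J_{\nu_j}(\lambda s)Y_{\nu_j}(\lambda)-J_{\nu_j}(\lambda)Y_{\nu_j}(\lambda s)\bigr\|_{L^2}\,\|f_j\|_{L^2},
\]
and then computes each factor from the single scalar asymptotic $|H_{\nu_j}(\nu_j z)|^2\simeq \bigl(\nu_j\sqrt{z^2-1}\bigr)^{-1}$ with $z(s)=s/(1-\tau\lambda^{-\beta})$. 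The two regimes $\beta\le\alpha$ and $\beta\ge\alpha$ then arise exactly as you describe, from whether $z^2(s)-1\simeq (s-1)+\lambda^{-\beta}$ is dominated by its constant or its variable part on $[1,1+\lambda^{-\alpha}]$. Your Schur approach would give the same outcome, at the price of a somewhat heavier bookkeeping of the Airy prefactors.

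One point deserves correction. You identify as the ``main technical obstacle'' the cancellation at $s=1$ in the Dirichlet combination $J_{\nu_j}(s\lambda)-\tfrac{J_{\nu_j}(\lambda)}{H_{\nu_j}(\lambda)}H_{\nu_j}(s\lambda)$, asserting that this cancellation is what recovers the small parameter $\lambda^{-\beta}$. The paper's argument shows this is not so: the cross term $J_{\nu_j}(\lambda s)Y_{\nu_j}(\lambda)-J_{\nu_j}(\lambda)Y_{\nu_j}(\lambda s)$ is bounded by the plain triangle inequality, with no use of its vanishing at $s=1$, and the $\beta$-dependence enters entirely through the sizes $|H_{\nu_j}(\lambda)|^2\simeq\lambda^{-1+\beta/2}$ and $\|H_{\nu_j}(\lambda r)\|_{L^2}^2$ computed from the asymptotic above. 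So the obstacle you flag is in fact absent, and your proof can be simplified accordingly.
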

\item In the elliptic case $\frac{\lambda}{\nu_{j}}\ll1$ or $\frac{\lambda}{\nu_{j}}\in[\epsilon_{0},1-\epsilon_{0}]$ for some small $\epsilon_{0}>0$ there is nothing to do since away from the characteristic variety \eqref{chv} we have nice bounds of the solution of the solution $w_{j}(r)$ of \eqref{wj}.
\end{itemize}

\begin{proof}(\emph{of Proposition \ref{propba}})
The solution $w_{j}(r)$ given in \eqref{solj} writes
\begin{equation}
w_{j}(r)=\frac{\pi}{8i}r^{-1/2}\Big(\int_{0}^{r}\chi_{\lambda}(s)f_{j}(s)s^{3/2}(\bar{H}_{\nu_{j}}(\lambda s)-\frac{\bar{H}_{\nu_{j}}(\lambda)}{H_{\nu_{j}}(\lambda)}H_{\nu_{j}}(\lambda s))dsH_{\nu_{j}}(\lambda r)+
\end{equation}
\[
+\int_{r}^{\infty}\chi_{\lambda}(s)f_{j}(s)s^{3/2}H_{\nu_{j}}(\lambda s)ds (\bar{H}_{\nu_{j}}(\lambda r)-\frac{\bar{H}_{\nu_{j}}(\lambda)}{H_{\nu_{j}}(\lambda)}H_{\nu_{j}}(\lambda r))\Big)
\]
thus in order to obtain estimates for $\|w_{j}(r)\|_{L^{2}([1,1+\lambda^{-\alpha}])}$ we have to determine bounds for $\|H_{\nu_{j}}(\lambda r)\|_{L^{2}([1,1+\lambda^{-\alpha}])}$ since we have for some constant $C>0$
\begin{equation}\label{borne}
\|w_{j}(r)\|_{L^{2}([1,1+\lambda^{-\alpha}])}\leq\|f_{j}(r)\|_{L^{2}([1,1+\lambda^{-\alpha}])}\|H_{\nu_{j}}(\lambda r)\|^{2}_{L^{2}([1,1+\lambda^{-\alpha}])}.
\end{equation}
We consider separately two regimes:
\begin{enumerate}
\item For $\frac{\nu_{j}}{\lambda}\ll 1$ we use \eqref{Ai} to obtain immediately
\begin{equation}\label{bornepetit}
\|H_{\nu_{j}}(\lambda r)\|^{2}_{L^{2}([1,1+\lambda^{-\alpha}])}\lesssim\lambda^{-1-\alpha}.
\end{equation}

\item For $0<\epsilon_{0}\leq c_{j}:=\frac{\nu_{j}}{\lambda}\leq1-\epsilon_{0}$ for some fixed $\epsilon_{0}>0$ we use the expansions \eqref{Aiia}, \eqref{Aiib}: set
\[
\cos\tau=\frac{c_{j}}{r},\quad  dr=c_{j}\frac{\sin\tau}{\cos^{2}\tau}d\tau, \quad \tau\in[\arccos c_{j},\arccos\frac{c_{j}}{1+\lambda^{-\alpha}}]=:[\tau_{j,0},\tau_{j,1}].
\]
Then we have
\begin{equation}\label{bornemoyen}
\|H_{\nu_{j}}(\lambda r)\|^{2}_{L^{2}([1,1+\lambda^{-\alpha}])}=\frac{2}{\pi\lambda}\int_{\tau_{j,0}}^{\tau_{j,1}}\frac{1}{\cos\tau}d\tau=\frac{1}{\pi\lambda}\ln\Big(\frac{1+\sin\tau)}{1-\sin\tau}\Big)|_{\tau_{j,0}}^{\tau_{j,1}}
\end{equation}
\[
\simeq\frac{1}{\pi\lambda}\Big((1+\lambda^{-\alpha})^{2}\frac{(1+\sqrt{1-c^{2}_{j}/(1+\lambda^{-\alpha})^{2}})^{2}}{(1+\sqrt{1-c^{2}_{j}})^{2}}-1\Big)
\simeq\frac{1}{\pi\lambda}\frac{\lambda^{-\alpha}}{(1-c^{2}_{j}+\lambda^{-\alpha})^{1/2}+(1-c^{2}_{j})^{1/2}}
\]
and $1-c^{2}_{j}\in [2\epsilon_{0}-\epsilon^{2}_{0},1-\epsilon^{2}_{0}]$ with $0<\epsilon_{0}<1$ fixed. Notice, however, that if one takes $c_{j}\simeq \lambda^{-\beta}$ for some $\beta>0$, then $\tan\theta_{j}\simeq(1-c^{2}_{j})^{1/2}\simeq\lambda^{-\beta/2}$. If $\beta\leq\alpha$ we can estimate \eqref{bornemoyen} by
$\lambda^{-(1-\alpha/2-(\alpha-\beta)}$, otherwise we have the bound $\lambda^{-(1+\alpha/2)}$.
\end{enumerate}
\end{proof}

\begin{proof}(\emph{of Proposition \ref{propsa}})
Here we use Proposition \ref{prophank}. Let $1-\nu_{j}/\lambda=\tau\lambda^{-\beta}$ for $\tau$ in a neighborhood of $1$. For $s\in [1,1+\lambda^{-\alpha}]$ write $z(s)\nu_{j}=s\lambda$, thus $z(s)=\frac{s}{1-\tau\lambda^{-\beta}}$. Since
\begin{equation}\label{gl}
\|w_{j}\| _{L^{2}([1,1+\lambda^{-\alpha}])}\leq C\frac{\lambda^{-\alpha/2}}{|H_{\nu_{j}}(\lambda)|}\|f_{j}\|_{L^{2}([1,1+\lambda^{-\alpha}])}\|H_{\nu_{j}}(\lambda r)\|_{L^{2}([1,1+\lambda^{-\alpha}])}\times
\end{equation}
\[
\times\|J_{\nu_{j}}(\lambda s)Y_{\nu_{j}}(\lambda)-J_{\nu_{j}}(\lambda)Y_{\nu_{j}}(\lambda s)\|_{L^{2}(1,1+\lambda^{-\alpha})},
\]
we shall compute separately each factor in \eqref{gl} (modulo small terms). We have
\begin{equation}\label{A}
\|H_{\nu_{j}}(\lambda r)\|^{2}_{L^{2}(1,1+\lambda^{-\alpha})}=\int_{1}^{1+\lambda^{-\alpha}}|J_{\nu_{j}}(\lambda r)|^{2}+|Y_{\nu_{j}}(\lambda r)|^{2}dr\simeq
\end{equation}
\[
\frac{2}{\pi}\nu^{-1}_{j}\int_{1}^{1+\lambda^{-\alpha}}\frac{1}{(z^{2}(s)-1)^{1/2}ds}\simeq\frac{2}{\pi}\nu^{-1}_{j}\frac{\lambda^{-\alpha}}{\lambda^{-\alpha}+2\tau\lambda^{-\beta}}\simeq
\left\{\begin{array}{ll}
\lambda^{-1-(\alpha-\beta)}, \quad \text{si} \quad 0\leq\beta\leq \alpha,\\
\lambda^{-1},\quad \text{si}, \quad \beta>\alpha,
\end{array}
\right.
\]
\begin{equation}\label{B}
|H_{\nu_{j}}(\lambda)|^{2}=|J_{\nu_{j}}(\lambda)|^{2}+|Y_{\nu_{j}}(\lambda)|^{2}\simeq \frac{2}{\pi}\nu_{j}^{-1}\frac{1}{(z^{2}(1)-1)^{1/2}}\simeq \lambda^{-1+\beta/2},
\end{equation}
while for the factor in the second line in \eqref{gl} we have 
\begin{equation}\label{C}
\|J_{\nu_{j}}(\lambda s)Y_{\nu_{j}}(\lambda)-J_{\nu_{j}}(\lambda)Y_{\nu_{j}}(\lambda s)\|_{L^{2}(1,1+\lambda^{-\alpha})}\leq
\end{equation}
\[
\frac{2}{\pi}\nu^{-1}_{j}\frac{1}{(z^{2}(1)-1)^{1/4}}(\int_{1}^{1+\lambda^{-\alpha}}\frac{1}{(z^{2}(s)-1)^{1/2}}ds)^{1/2}\simeq
\left\{\begin{array}{ll}
\lambda^{-1-\alpha/2+\beta/4}, \quad \text{si} \quad  0\leq\beta\leq \alpha,\\
\lambda^{-1+\beta/4},\quad \text{si}, \quad \beta>\alpha.
\end{array}
\right.
\]
From \eqref{gl}, \eqref{A}, \eqref{B}, \eqref{C} we deduce
\begin{equation}
\|w_{j}\| _{L^{2}([1,1+\lambda^{-\alpha}])}\leq C\|f_{j}\|_{L^{2}([1,1+\lambda^{-\alpha}])}\times
\left\{\begin{array}{ll}
\lambda^{-1-\alpha/2-(\alpha-\beta)}, \quad \text{si} \quad  0\leq\beta\leq \alpha,\\
\lambda^{-1-\alpha/2},\quad \text{si}, \quad \beta>\alpha.
\end{array}
\right.
\end{equation}
\end{proof}

\textbf{Neumann:} \emph{As far as the Neumann problem is concerned, we must solve the
problem
\begin{equation}\label{2nuN}
\Big(\partial^{2}_{r}+ \frac{2}{r}\partial_{r}+
(\lambda^{2}-\frac{\nu^{2}}{r^{2}})\Big)w_{j}(r)=\chi(\lambda^{\alpha}(r-1))f_{j}(r),\quad
\frac{\partial w_{j}}{\partial r}(1)=0,
\end{equation}
which gives, after performing similar computations
\begin{equation}\label{soln}
w^{N}_{j}(r)=\frac{\pi}{8i}r^{-1/2}H_{\nu_{j}}(\lambda
r)\Big(\frac{\bar{H}^{'2}_{\nu_{j}}(\lambda)}{|H'_{\nu_{j}}(\lambda)|^{2}}\int_{1}^{\infty}\chi_{\lambda}(s)f_{j}(s)s^{3/2}H_{\nu_{j}}(\lambda
s)ds-
\end{equation}
\[
-\int_{1}^{r}f_{j}(s)s^{3/2}\bar{H}_{\nu_{j}}(\lambda
s)ds\Big)
 -\frac{\pi}{8i}r^{-1/2}\bar{H}_{\nu_{j}}(\lambda
r)\int_{r}^{\infty}\chi_{\lambda}(s)f_{j}(s)s^{3/2}H_{\nu_{j}}(\lambda.
s)ds,
\]
Propositions \ref{propba}, \ref{propsa} hold true for the Neumann case too.}

\section{Proof of Theorem $1$}\label{secthm1}
\subsection{Reduction to the Helmholtz equation}
\begin{prop}
Consider the Helmholtz equation
\begin{equation}\label{helmholtz}
\left\{
             \begin{array}{ll}
                (\Delta_{D}+\lambda^{2})w=\chi_{\lambda}f\quad on \quad \Omega,\\
               w|_{\partial\Omega}=0,\\
                \end{array}
                \right.
\end{equation}
with the Sommerfield "radiation condition" (where $r=|x|$)
\begin{equation}\label{radiation}
r(\partial_{r}w-i\lambda w)\rightarrow_{r\rightarrow\infty}0.
\end{equation}
In order to prove Theorem \ref{thm1} it is enough to establish estimates for the $L^{2}$ norms of the Helmholtz equation \eqref{helmholtz}.
\end{prop}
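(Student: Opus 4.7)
The plan is to reduce both smoothing estimates of Theorem \ref{thm1} to uniform $L^{2}$ bounds for the outgoing solution of the Helmholtz equation \eqref{helmholtz}--\eqref{radiation}, via the Fourier transform in time combined with a Kato $TT^{*}$ argument. The spectral cut-off $\psi(-\Delta_{D}/\lambda^{2})$ confines the dual time variable $\sigma$ to a window where $|\sigma|\simeq\lambda^{2}$, on which the relevant object is the outgoing resolvent $(\Delta_{D}+\mu^{2}+i0)^{-1}$ with $\mu^{2}:=-\sigma\simeq\lambda^{2}$.

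For the inhomogeneous estimate \eqref{i}, setting $\tilde g:=\psi(-\Delta_{D}/\lambda^{2})\chi_{\lambda}g$ one has $v=K\ast\tilde g$ with the retarded kernel $K(t)=\mathbf{1}_{t\geq 0}e^{it\Delta_{D}}$. A standard $e^{-\epsilon t}$ regularization yields $\hat K(\sigma)=-i(\Delta_{D}+\mu^{2}+i0)^{-1}$, so that $w_{\sigma}:=\hat v(\sigma,\cdot)$ solves the outgoing Helmholtz problem $(\Delta_{D}+\mu^{2})w_{\sigma}=-i\,\hat{\tilde g}(\sigma,\cdot)$. Applied pointwise in $\sigma$, the worst-case bound
\[
\|\chi_{\lambda}w_{\sigma}\|_{L^{2}_{x}}\leq C\lambda^{-1-\alpha/2}\|\chi_{\lambda}\hat{\tilde g}(\sigma,\cdot)\|_{L^{2}_{x}}
\]
produced by Propositions \ref{propba}--\ref{propsa}, followed by Plancherel in time, yields the $L^{2}_{t}L^{2}_{x}$ estimate. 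The passage to the Dirichlet Sobolev norms $H^{s}_{D}$, $H^{s+1}_{D}$ is automatic since on the range of $\psi(-\Delta_{D}/\lambda^{2})$ one has $\|\cdot\|_{H^{r}_{D}}\simeq\lambda^{r}\|\cdot\|_{L^{2}}$ for $r\in[-1,1]$, producing the prescribed factor $\lambda^{-\alpha/2}$ after multiplying by the extra $\lambda$ gained on the left-hand side.

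For the homogeneous estimate \eqref{ii} I would invoke $TT^{*}$ with $Tu_{0}:=\chi_{\lambda}e^{it\Delta_{D}}\psi(-\Delta_{D}/\lambda^{2})u_{0}$. The operator
\[
TT^{*}g=\chi_{\lambda}\int_{\mathbb{R}}e^{i(t-\tau)\Delta_{D}}\psi(-\Delta_{D}/\lambda^{2})^{2}\chi_{\lambda}g(\tau)\,d\tau
\]
differs from the Duhamel operator of the previous step only by the absence of the retarded cut-off, so that its Fourier-in-time symbol is $2\pi\delta(\Delta_{D}+\mu^{2})\psi^{2}$, equal by Stone's formula to the jump $i\bigl[R_{+}(\mu^{2})-R_{-}(\mu^{2})\bigr]$ of the resolvent across the real axis. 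Each boundary value obeys the same Helmholtz bound ($R_{-}$ by complex conjugation), whence $\|TT^{*}\|_{L^{2}_{t}L^{2}_{x}\to L^{2}_{t}L^{2}_{x}}\leq C\lambda^{-1-\alpha/2}$ and therefore $\|T\|_{L^{2}_{x}\to L^{2}_{t}L^{2}_{x}}\leq C\lambda^{-1/2-\alpha/4}$; multiplying by $\lambda^{s+1/2}$ via the same Sobolev equivalence gives the exponent $s-\alpha/4$ of \eqref{ii}.

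The main technical obstacle is the careful matching of the $+i0$ prescription coming from the Fourier representation of the retarded kernel $K$ with the Sommerfeld outgoing condition \eqref{radiation}, so that $w_{\sigma}$ is indeed the Hankel-type solution built in Section \ref{smef}; and secondly, the verification that the bounds of Propositions \ref{propba}--\ref{propsa} hold uniformly as $\mu^{2}$ ranges over the whole support of $\psi(\cdot/\lambda^{2})$ rather than only at the central value $\mu=\lambda$. The latter amounts to checking stability of the case-by-case dichotomy (transversal, tangential, elliptic) on the ratio $\nu_{j}/\mu$ and of the Hankel asymptotics used in Section \ref{smef} under a bounded perturbation of the spectral parameter, which is automatic since those estimates were established with constants depending only on the shape of $\psi$.
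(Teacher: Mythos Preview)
Your reduction of the Duhamel operator to the outgoing resolvent via the $e^{-\epsilon t}$ regularization, Fourier transform in time, and Plancherel is exactly the paper's argument; the matching of the $+i0$ boundary value with the Sommerfeld condition \eqref{radiation} and the uniformity over $\mathrm{supp}\,\psi$ are correctly identified as the points to check. For \eqref{ii} your Stone's formula route is a legitimate variant; the paper instead writes $A_\lambda A_\lambda^* f(t)=\int_0^t(\cdots)\,d\tau+\int_t^T(\cdots)\,d\tau$ and reduces each half directly to \eqref{i} by time reversal, bypassing the spectral measure entirely.

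There is, however, a real gap in your Sobolev step. The equivalence $\|\cdot\|_{H^r_D}\simeq\lambda^r\|\cdot\|_{L^2}$ holds on the range of $\psi(-\Delta_D/\lambda^2)$, but the quantity on the left of \eqref{i} is $\chi_\lambda\psi(-\Delta_D/\lambda^2)v$ with the spatial cut-off $\chi_\lambda$ applied \emph{after} the spectral projector. Since $\chi_\lambda$ does not commute with $\psi(-\Delta_D/\lambda^2)$, this function is not spectrally localized and the equivalence is not available as stated (the same issue recurs for $\chi_\lambda e^{it\Delta_D}\psi u_0$ in \eqref{ii}). The paper does not use your shortcut: in the subsection following this proposition it obtains the $H^1_D$ bound at $s=0$ by multiplying \eqref{helmholtz} by $\chi_\lambda\bar w$ and integrating by parts, reaches $s=-1$ by duality, and for $s=1$ writes $\Delta_D(\chi_\lambda w)=\chi_\lambda\Delta_D w+[\Delta_D,\chi_\lambda]\tilde\chi_\lambda w$ together with the explicit commutator bound $\|[\Delta_D,\chi_\lambda]\|_{H^1_D\to L^2}\lesssim\lambda^\alpha$ (harmless since $\alpha<1$), then interpolates. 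The analogous commutator work underlies Lemma \ref{lemlapla} for \eqref{ii}. Calling this passage ``automatic'' skips precisely the content that makes the reduction to pure $L^2$ Helmholtz bounds valid.
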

\begin{proof}
Consider the inhomogeneous Schr\"{o}dinger equation
\begin{equation}\label{four}
(i\partial_{t}+\Delta_{D})v(t,x)= \chi_{\lambda}g(t,x), \quad
v|_{t=0}=0.
\end{equation}
We denote by $v^{\pm}$ the solutions to the equations
\begin{equation}\label{fourpm}
(i\partial_{t}+\Delta_{D})v^{\pm}(t,x)=
\chi_{\lambda}g(t,x)\textbf{1}_{\{\pm t>0\}}, \quad
v^{\pm}|_{t=0}=0.
\end{equation}
For $\epsilon>0$ and $\pm t>0$ we define
$v^{\pm}_{\epsilon}=e^{\mp\epsilon t}v^{\pm}$ which satisfy
\begin{equation}\label{foureps}
\left\{
             \begin{array}{ll}
                (i\partial_{t}+\Delta_{D}\pm i\epsilon)v_{\epsilon}^{\pm}=1_{\{\pm t>0\}}e^{-\epsilon t}\chi_{\lambda}g(t,x),\\
               v_{\epsilon}^{\pm}|_{t=0}=0,\quad
               v_{\epsilon}^{\pm}|_{\partial\Omega}=0.\\
                \end{array}
                    \right.
\end{equation}
After performing the Fourier transform $\mathcal{F}$ with respect to the time variable $t$, the equation
\eqref{foureps} becomes
\begin{equation}\label{eps}
\left\{
             \begin{array}{ll}
(\Delta_{D}-\tau\pm i\epsilon)\hat{v}^{\pm}_{\epsilon}(\tau,x)=
\chi_{\lambda}\mathcal{F}(1_{\{\pm t>0\}}e^{-\epsilon t}\chi_{\lambda}{g})(\tau,x),\\
\hat{v}^{\pm}_{\epsilon}|_{\partial\Omega}=0,
            \end{array}
                 \right.
\end{equation}
where $\tau$ denotes the dual variable of $t$ and we deduce
\begin{equation}\label{outgo}
\chi_{\lambda}\hat{v}_{+}^{\epsilon}(\tau,x)=\chi_{\lambda}(\Delta_{D}-\tau+i\epsilon)^{-1}
\chi_{\lambda}\mathcal{F}(1_{\{\pm t>0\}}e^{-\epsilon t}\chi_{\lambda}{g})(\tau,x).
\end{equation}
Since $-\Delta_{D}$ is a positive self-adjoint operator, the resolvent $(-\Delta_{D}-z)^{-1}$ is analytic in $\mathbb{C}\setminus\mathbb{R}_{+}$. Since the spectrum of $-\Delta_{D}$ is confined to the positive real axis it is convenient to introduce $\lambda^{2}\in\mathbb{R}_{+}$ as a spectral parameter. Notice, however, that there are two manners to approach $\lambda^{2}>0$ in $\mathbb{C}\setminus\mathbb{R}_{+}$, choosing the positive imaginary part, which corresponds  to considering $\lambda^{2}+i\epsilon$, or the negative imaginary part, which corresponds to $\lambda^{2}-i\epsilon$. The "physical" choice corresponds to the limiting absorption principal and consists of taking $\lambda^{2}+i\epsilon$. In some sense, the limiting absorption principal allows to recover the "sense of time". If one replaces $\lambda^{2}$ by $\lambda^{2}\pm i\epsilon$, $\epsilon>0$, then $(\lambda^{2}\pm i\epsilon)$ belongs to the resolvent of the Laplace operator on $\Omega$ with Dirichlet boundary conditions on $\partial\Omega$ and we can let $\epsilon$ tend to $0$ in \eqref{outgo} (since the
operator
$\chi_{\lambda}(\Delta_{D}-\tau+i\epsilon)^{-1}\chi_{\lambda}$ has a
limit as $\epsilon\rightarrow 0$) and so we can express the
Fourier transform of the unique solutions $v^{\pm}$ of \eqref{fourpm} as
\begin{equation}\label{outg}
\chi_{\lambda}\hat{v}^{\pm}(\tau,x)=\lim_{\epsilon\rightarrow 0}\chi_{\lambda}(\Delta_{D}-\tau\pm i\epsilon)^{-1}
\chi_{\lambda}\mathcal{F}(1_{\{\pm t>0\}}e^{-\epsilon t}\chi_{\lambda}{g})(\tau,x).
\end{equation}
\begin{rmq}
Notice that here we used the fact that for $-\frac{\tau}{\lambda^{2}}$ away from a
neighborhood of $1$ we have the bounds
\[
\|\chi_{\lambda}\hat{v}^{+}\|_{L^{2}}\leq \frac{C\|\chi_{\lambda}\hat{g}\|_{L^{2}}}{|\tau|+\lambda^{2}}. 
\]
\end{rmq}
We conclude using that if $w_{\epsilon}=\hat{v}^{+}_{\epsilon}$, $f_{\epsilon}=\mathcal{F}(1_{\{t>0\}}e^{-\epsilon t}\chi_{\lambda}{g})$ the following holds
\begin{prop}(\cite{tay96})
As $\epsilon\rightarrow 0$, $w_{\epsilon}$ converges to the unique solution of \eqref{helmholtz}-\eqref{radiation}, where $f=\lim_{\epsilon\rightarrow 0}f_{\epsilon}$.
\end{prop}
It remains to notice that if $\mathcal{H}$ is any Hilbert space,
than the Fourier transform defines an isometry on
$L^{2}(\mathbb{R},\mathcal{H})$.
\end{proof}

\subsection{Smoothing effect}
In this section we prove Theorem \ref{thm1}:
\begin{itemize}
\item We prove \eqref{i} for $s=0$. Let $\lambda>0$, $w$ and $g$
be such that \eqref{2d} holds. We multiply \eqref{2d} by
$\chi_{\lambda}\bar{w}$ and we integrate on $\Omega$
\begin{equation}
\int\chi_{\lambda}|\nabla w|^{2}dx=
\lambda^{2}\int\chi_{\lambda}|w|^{2}dx-2\lambda^{\alpha}\int<\nabla
w,\nabla\chi(\lambda^{\alpha}(|x|-1))>\bar{w}dx-\int
\chi_{\lambda}^{2}f\bar{w}dx,
\end{equation}
thus, using that $0\leq\chi_{\lambda}\leq 1$, $\chi_{\lambda}\leq\chi_{\lambda}^{2}$, $\nabla\chi(\lambda^{\alpha}(|x|-1))\lesssim\chi_{\lambda}$, we have for all $\delta>0$
\begin{equation}
\int|\chi_{\lambda}\nabla
w|^{2}dx\lesssim\lambda^{2}\int|\chi_{\lambda}w|^{2}dx
+\delta\int|\chi_{\lambda}f|^{2}dx 
+\frac{1}{4\delta}\int|\chi_{\lambda}w|^{2}dx+\lambda^{\alpha}\int\bar{w}|\nabla w|\chi_{\lambda}^{2}dx.
\end{equation}
Since for all $\delta_{1}>0$ one has
\[
\int\bar{w}|\nabla w|\chi_{\lambda}^{2}dx\leq(\int|\chi_{\lambda}\nabla w|^{2}dx)^{1/2}(\int|\chi_{\lambda}w|^{2}dx)^{1/2}\leq \delta_{1}\|\chi_{\lambda}\nabla w\|^{2}_{L^{2}(\Omega)}+\frac{1}{4\delta_{1}}\|\chi_{\lambda}w\|^{2}_{L^{2}(\Omega)},
\]
taking $\delta=\lambda^{-\alpha}$, $\delta_{1}=\lambda^{-\alpha}/2$ together with $\int|\chi_{\lambda}w|^{2}\lesssim\lambda^{-2-\alpha}\int|\chi_{\lambda}f|^{2}$ (according to the computations made in the preceding section) we deduce
\begin{equation}
\int|\chi_{\lambda}\nabla
w|^{2}dx\lesssim\lambda^{-\alpha}\int|\chi_{\lambda}f|^{2}dx.
\end{equation}
Thus we have obtained
\begin{equation}\label{ze}
\|\chi_{\lambda}w\|_{H^{1}_{D}(\Omega)}
\lesssim\lambda^{-\frac{\alpha}{2}}\|\chi_{\lambda}f\|_{L^{2}(\Omega)}.
\end{equation}
%Here $\tilde{\chi}_{\lambda}$ is a smooth function equal to $1$ on the support of $\chi_{\lambda}$. 
Dualizing \eqref{ze} we find
\begin{equation}\label{un}
\|\chi_{\lambda}w\|_{L^{2}(\Omega)}\lesssim\lambda^{-\frac{\alpha}{2}}
\|\chi_{\lambda}f\|_{H_{D}^{-1}(\Omega)},
\end{equation}
which will yield \eqref{i} for $s=-1$. Now we prove \eqref{i} for
$s=1$. Let again $w$ and $f$ be such that \eqref{2d} holds and
let $\tilde{\chi}_{\lambda}$ be a smooth cutoff function equal to $1$
on the support of $\chi_{\lambda}$. Write
\begin{equation}
\|\chi_{\lambda}w\|_{H_{D}^{2}(\Omega)}\thickapprox\|\chi_{\lambda}w\|_{H_{D}^{1}(\Omega)}+
\|\Delta_{D}(\chi_{\lambda}w)\|_{L^{2}(\Omega)}.
\end{equation}
Since $\|\chi_{\lambda}w\|_{H_{D}^{1}(\Omega)}$ can be estimated
by means of \eqref{ze}, we only need to obtain bounds for
$\|\Delta_{D}(\chi_{\lambda}w)\|_{L^{2}(\Omega)}$. We write
\begin{equation}\label{norma}
\Delta_{D}(\chi_{\lambda}w)=\chi_{\lambda}\Delta_{D}w+[\Delta_{D},\chi_{\lambda}]\tilde{\chi}_{\lambda}w.
\end{equation}
The commutator $[\Delta_{D},\chi_{\lambda}]$ is bounded from
$H_{D}^{1}(\Omega)$ to $L^{2}(\Omega)$ with norm less then
$C\lambda^{\alpha}$ and we have chosen $\alpha<1$ ($C$ does not
depend on $\lambda$), while the first term in the right hand side
of \eqref{norma} is in $H_{D}^{1}(\Omega)$ since
$\Delta_{D}w=\chi_{\lambda}f-\lambda^{2}w$ and satisfies
\eqref{2d} with $\chi_{\lambda}f$ replaced by
$\Delta_{D}(\chi_{\lambda}f)$. Therefore, we can apply \eqref{ze}
in order to deduce the inequality \eqref{i} for the Fourier
transforms in time of $u$ and $f$ which appear in the proposition.
Since we have obtained the result for $s=-1$ and $s=1$ we can use
an interpolation argument to get it for $s\in [-1,1]$.

\item We turn to the proof of \eqref{ii} for $s=0$. If we denote by
$A_{\lambda}$ the operator which to a given $u_{0}\in
L^{2}(\Omega)$ associates
$\chi_{\lambda}e^{it\Delta_{D}}\psi(\frac{-\Delta_{D}}{\lambda^{2}})u_{0}$,
we need to prove that $A_{\lambda}$ is bounded from
$L^{2}(\Omega)$ to $L^{2}_{T}H_{D}^{\frac{1}{2}}(\Omega)$ with the
norm less than $C\lambda^{-\frac{\alpha}{4}}$ for some constant $C$ independent of $\lambda$, which in turn is
equivalent to the continuity of the adjoint operator,
\begin{equation}
A^{*}_{\lambda}(f)=\int_{0}^{T}\psi(\frac{-\Delta_{D}}{\lambda^{2}})e^{-i\tau\Delta_{D}}
\chi_{\lambda}f(\tau)d\tau,
\end{equation}
from $L^{2}_{T}H_{D}^{-\frac{1}{2}}(\Omega)$ to $L^{2}(\Omega)$
with the norm bounded by $C\lambda^{-\frac{\alpha}{4}}$, which is
equivalent to showing that the operator
$A_{\lambda}A_{\lambda}^{*}$ defined by
\begin{equation}
(A_{\lambda}A^{*}_{\lambda}f)(t)=\int_{0}^{T}\chi_{\lambda}e^{it\Delta_{D}}\psi^{2}
(\frac{-\Delta_{D}}{\lambda^{2}})e^{-i\tau\Delta_{D}}\chi_{\lambda}f(\tau)d\tau
\end{equation}
is continuous from $L^{2}_{T}H_{D}^{-\frac{1}{2}}(\Omega)$ to
$L^{2}_{T}H_{D}^{\frac{1}{2}}(\Omega)$ and its norm is bounded
from above by $C\lambda^{-\frac{\alpha}{2}}$. We write
$(A_{\lambda}A^{*}_{\lambda}f)(t)$ as a sum
\begin{equation}\label{aa}
(A_{\lambda}A^{*}_{\lambda}f)(t)=\int_{0}^{t}\chi_{\lambda}\psi(\frac{-\Delta_{D}} {\lambda^{2}})
e^{i(t-\tau)\Delta_{D}}\psi(\frac{-\Delta_{D}}{\lambda^{2}})\chi_{\lambda}f(\tau)d\tau
+
\end{equation}
\[
+\int_{t}^{T}\chi_{\lambda}\psi(\frac{-\Delta_{D}}{\lambda^{2}})e^{i(t-\tau)\Delta_{D}}
\psi(\frac{-\Delta_{D}}{\lambda^{2}})\chi_{\lambda}f(\tau)d\tau.
\]
Hence, in order to conclude it is sufficient to apply \eqref{i}
with $s=-\frac{1}{2}$ together with time inversion, since the second
term on the right hand side of \eqref{aa} will solve the same
problem with initial data $u|_{t=T}=u_{0}$. 

We prove now \eqref{ii} for $s=1$. 
\begin{lemma}\label{lemlapla}
We have
\begin{equation}
\|\Delta_{D}\Big(\chi_{\lambda}
e^{it\Delta_{D}}\psi(\frac{-\Delta_{D}}{\lambda^{2}})u_{0}\Big)\|_{L^{2}_{T}H^{-\frac{1}{2}}_{D}
(\Omega)}\lesssim
\lambda^{1-\frac{\alpha}{4}}\|\chi_{\lambda}\psi(\frac{-\Delta_{D}}{\lambda^{2}})
u_{0}\|_{L^{2}(\Omega)}.
\end{equation}
\end{lemma}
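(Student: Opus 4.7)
My plan is to decompose
\[
\Delta_D(\chi_\lambda v) = \chi_\lambda \Delta_D v + [\Delta_D, \chi_\lambda]\,v, \qquad v := e^{it\Delta_D}\psi(-\Delta_D/\lambda^2)\,u_0,
\]
and estimate each piece separately. The splitting is the same one used to go from the $s=0$ to the $s=1$ case of \eqref{i} earlier in Section \ref{secthm1}; the new ingredient is that the losses must now be measured in $L^2_T H^{-1/2}_D$ rather than $L^2_T L^2$, so I have to carefully exploit the spectral localization of $v$ at scale $\lambda$.

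For the main term, spectral calculus gives $\Delta_D v = -\lambda^2 e^{it\Delta_D}\widetilde\psi(-\Delta_D/\lambda^2)u_0$ with $\widetilde\psi(x)=x\psi(x)\in C^\infty_0(\mathbb{R}^*)$; set $w := e^{it\Delta_D}\widetilde\psi(-\Delta_D/\lambda^2)u_0$, a function spectrally localized in $\{-\Delta_D/\lambda^2 \in \mathrm{supp}\,\widetilde\psi\}$. Applying \eqref{ii} at $s=0$ with $\widetilde\psi$ in place of $\psi$ yields
\[
\|\chi_\lambda w\|_{L^2_T H^{1/2}_D(\Omega)} \lesssim \lambda^{-\alpha/4}\|\psi(-\Delta_D/\lambda^2)u_0\|_{L^2(\Omega)},
\]
and the extra factor $\lambda^{-1}$ needed to reach $H^{-1/2}_D$ comes from a Bernstein-type inequality: because $\chi_\lambda$ is a symbol of semiclassical order $\alpha$ (with $h=1/\lambda$) and $\alpha<1$, multiplication by $\chi_\lambda$ spreads Fourier support by $O(\lambda^\alpha)\ll\lambda$, so $\chi_\lambda w$ remains essentially at frequency $\lambda$ and therefore $\|\chi_\lambda w\|_{H^{-1/2}_D}\lesssim \lambda^{-1}\|\chi_\lambda w\|_{H^{1/2}_D}$ up to lower-order corrections involving the commutator $[\chi_\lambda,\widetilde\Psi(-\Delta_D/\lambda^2)]$, which has operator norm $O(\lambda^{\alpha-1})$. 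Multiplying through by $\lambda^2$ recovers the target exponent $\lambda^{1-\alpha/4}$.

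For the commutator term $[\Delta_D,\chi_\lambda]v = 2\nabla\chi_\lambda\cdot\nabla v + (\Delta\chi_\lambda)v$, I use the analog of the bound already used in the $s=1$ discussion of \eqref{i}: $[\Delta_D,\chi_\lambda]:H^{1/2}_D\to H^{-1/2}_D$ has operator norm $O(\lambda^\alpha)$ on frequency-$\lambda$ functions. Since the commutator is supported in $\mathrm{supp}(\nabla\chi_\lambda)\subset\{\widetilde\chi_\lambda=1\}$ for a slightly enlarged cutoff $\widetilde\chi_\lambda$, applying \eqref{ii} at $s=0$ yields
\[
\|[\Delta_D,\chi_\lambda]\,v\|_{L^2_T H^{-1/2}_D} \lesssim \lambda^\alpha\,\|\widetilde\chi_\lambda v\|_{L^2_T H^{1/2}_D} \lesssim \lambda^{3\alpha/4}\|\psi(-\Delta_D/\lambda^2)u_0\|_{L^2},
\]
which is subdominant since the hypothesis $\alpha<2/3$ gives $3\alpha/4 < 1-\alpha/4$.

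The hardest step is the rigorous justification of the Bernstein reduction on the exterior domain with Dirichlet conditions, where the spectral and Fourier projections do not coincide; this is where semiclassical pseudo-differential calculus near the boundary (using that $\chi_\lambda$ has spatial scale $\lambda^{-\alpha}$, larger than the wavelength $1/\lambda$ since $\alpha<1$) is required. An alternative that avoids this is to Fourier transform in time as in the reduction of Theorem \ref{thm1} to Helmholtz and invoke Propositions \ref{propba}, \ref{propsa} mode by mode in the spherical-harmonic expansion, where the spectral parameter $\lambda^2$ is explicit and the $\lambda^{-1}$ gain between $H^{-1/2}$ and $H^{1/2}$ becomes transparent.
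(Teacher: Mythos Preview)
Your decomposition $\Delta_D(\chi_\lambda v)=\chi_\lambda\Delta_D v+[\Delta_D,\chi_\lambda]v$ and your treatment of the commutator piece are the same as the paper's. The divergence is in the main term $\chi_\lambda\Delta_D v$.

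For that term the paper does \emph{not} attempt a Bernstein reduction. Instead it sets $B_\lambda u_0:=\chi_\lambda(-\Delta_D+1)e^{it\Delta_D}\psi(-\Delta_D/\lambda^2)u_0$ and proves $\|B_\lambda\|_{H^1_D\to L^2_TH^{-1/2}_D}\lesssim\lambda^{-\alpha/4}$ by a $TT^*$ argument: one computes
\[
B_\lambda(-\Delta_D+1)^{-1}B_\lambda^*=(-\Delta_D+1)A_\lambda A_\lambda^*+[\Delta_D,\chi_\lambda]\int_0^T e^{i(t-\tau)\Delta_D}\psi^2(-\Delta_D/\lambda^2)\chi_\lambda(\cdot)\,d\tau,
\]
and the first piece is bounded directly by the inhomogeneous smoothing estimate \eqref{i} at $s=\tfrac12$, which has already been established. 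The key advantage is that the spectral cutoff stays adjacent to $e^{it\Delta_D}$ throughout, so one never has to quantify how multiplication by $\chi_\lambda$ spreads spectral support.

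Your direct route has a genuine gap at the Bernstein step. Writing $\chi_\lambda w=\widetilde\Psi(-\Delta_D/\lambda^2)\chi_\lambda w-[\widetilde\Psi,\chi_\lambda]w$, the first piece does give the desired $\lambda^{-1}$ gain, but the remainder is \emph{not} lower order once you restore the factor $\lambda^2$: with the commutator bound $O(\lambda^{\alpha-1})$ you quote (on $L^2\to L^2$), its contribution to $\|\chi_\lambda\Delta_D v\|_{H^{-1/2}}$ is at best
\[
\lambda^{2}\cdot\lambda^{\alpha-1}\|w\|_{L^2}=\lambda^{1+\alpha}\|\psi(-\Delta_D/\lambda^2)u_0\|_{L^2},
\]
which exceeds the target $\lambda^{1-\alpha/4}$ for every $\alpha>0$. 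To close the argument you would need $[\widetilde\Psi(-\Delta_D/\lambda^2),\chi_\lambda]$ to gain an additional full power of $\lambda^{-1}$ (or more) in a suitable Sobolev norm, and establishing that on an exterior domain with boundary is precisely the delicate functional-calculus work the $TT^*$ approach sidesteps. Your fallback of returning to the Helmholtz expansion would succeed in principle, but at that point you are essentially reproving \eqref{i} mode by mode rather than deducing the lemma from the estimates already in hand.
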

\begin{cor}\label{corlapla}
Lemma \ref{lemlapla} yields
\begin{equation}
\|\chi_{\lambda}
e^{it\Delta_{D}}\psi(\frac{-\Delta_{D}}{\lambda^{2}})u_{0}\|_{L^{2}_{T}H^{\frac{3}{2}}_{D}
(\Omega)}\lesssim
\lambda^{1-\frac{\alpha}{4}}\|\chi_{\lambda}\psi(\frac{-\Delta_{D}}{\lambda^{2}})
u_{0}\|_{L^{2}(\Omega)}.
\end{equation}
\end{cor}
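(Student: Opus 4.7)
The plan is to deduce the corollary directly from Lemma \ref{lemlapla} via the spectral characterization of the Dirichlet Sobolev scale. Recall that $H^s_D = D((-\Delta_D)^{s/2})$ with norm $\|v\|_{H^s_D}=\|(-\Delta_D)^{s/2}v\|_{L^2}$ for $s\geq 0$, and the negative-order spaces are defined by duality, so that $\|w\|_{H^{-1/2}_D}=\|(-\Delta_D)^{-1/4}w\|_{L^2}$ on the appropriate closure of test functions. The identity I want to exploit is
\[
\|v\|_{H^{3/2}_D} \;=\; \|(-\Delta_D)^{3/4}v\|_{L^2} \;=\; \|(-\Delta_D)^{-1/4}\Delta_D v\|_{L^2} \;=\; \|\Delta_D v\|_{H^{-1/2}_D},
\]
valid for every $v$ in $D((-\Delta_D)^{3/4})$.

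I would apply this identity pointwise in $t$ to
\[
v(t):=\chi_{\lambda}\,e^{it\Delta_D}\psi(-\Delta_D/\lambda^2)u_0.
\]
For a.e.\ $t$ the free evolution $u(t)=e^{it\Delta_D}\psi(-\Delta_D/\lambda^2)u_0$ lies in $D(\Delta_D)=H^2(\Omega)\cap H^1_0(\Omega)$, so $u(t)|_{\partial\Omega}=0$, and since $\chi_{\lambda}$ is a smooth cutoff supported away from infinity the product $v(t)$ still sits in $H^2(\Omega)\cap H^1_0(\Omega)\subset D((-\Delta_D)^{3/4})$. Squaring and integrating over $[0,T]$ therefore yields
\[
\|v\|^2_{L^2_T H^{3/2}_D} \;=\; \|\Delta_D v\|^2_{L^2_T H^{-1/2}_D},
\]
and the corollary follows by plugging in the bound supplied by Lemma \ref{lemlapla}.

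The only potentially delicate point is the identification $H^{-1/2}_D \cong (-\Delta_D)^{-1/4}L^2$, but this is a standard consequence of the conventions recalled just before Proposition \ref{prop1}, namely the duality between $H^s_0$ and $H^{-s}$ for $s\in[0,1]$ combined with the spectral representation of $(-\Delta_D)^{s/2}$. Once this identification is taken for granted, the corollary is essentially a reformulation of the lemma, and I do not anticipate any further obstacle; in particular no new analytic input (Hankel asymptotics, commutator estimate, etc.) beyond what was used in the proof of the lemma is required.
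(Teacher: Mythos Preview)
Your argument is correct and is exactly what the paper has in mind: the corollary is stated without proof, immediately after Lemma \ref{lemlapla}, precisely because the passage from $\|\Delta_D(\chi_\lambda u)\|_{L^2_T H^{-1/2}_D}$ to $\|\chi_\lambda u\|_{L^2_T H^{3/2}_D}$ is the tautological spectral identity you wrote down. No separate analytic input is needed, and your verification that $\chi_\lambda u(t)\in H^2\cap H^1_0=D(\Delta_D)\subset D((-\Delta_D)^{3/4})$ is the right way to justify applying that identity.
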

Corollary \ref{corlapla} and an interpolation argument now yield for $\theta\in[0,1]$
\begin{equation}
\|\chi_{\lambda}
e^{it\Delta_{D}}\psi(\frac{-\Delta_{D}}{\lambda^{2}})u_{0}\|_{L^{2}_{T}H^{\frac{3\theta}{2}}_{D}(\Omega)}\lesssim
\lambda^{\frac{3\theta}{2}-\frac{1}{2}-\frac{\alpha}{4}}\|\chi_{\lambda}
\psi(\frac{-\Delta_{D}}{\lambda^{2}})u_{0}\|_{L^{2}(\Omega)},
\end{equation}
achieving the proof of Theorem \ref{thm1}.
\begin{proof}(\emph{of Lemma \ref{lemlapla}})
Write
\begin{equation}\label{dlt}
(-\Delta_{D}+1)\Big(\chi_{\lambda}
e^{it\Delta_{D}}\psi(\frac{-\Delta_{D}}{\lambda^{2}})u_{0}\Big)=
\chi_{\lambda}
(-\Delta_{D}+1)\Big(e^{it\Delta_{D}}\psi(\frac{-\Delta_{D}}{\lambda^{2}})u_{0}\Big)-
\end{equation}
\[
-[\Delta_{D},\chi_{\lambda}]
\Big(e^{it\Delta_{D}}\psi(\frac{-\Delta_{D}}{\lambda^{2}})u_{0}\Big).
\]
\begin{itemize}
\item For the first term in the right hand side of \eqref{dlt} we show that the operator
\begin{equation}
B_{\lambda}:=\Big(H_{D}^{1}(\Omega)\ni
u_{0}\rightarrow\chi_{\lambda}(-\Delta_{D}+1)\Big(e^{it\Delta_{D}}\psi
(\frac{-\Delta_{D}}{\lambda^{2}})u_{0}\Big)\in
L^{2}_{T}H^{-\frac{1}{2}}_{D}(\Omega)\Big)
\end{equation}
is continuous and its norm from $H^{1}_{D}(\Omega)$ to $L^{2}_{T}H^{-\frac{1}{2}}_{D}(\Omega)$ is bounded from above by
$C\lambda^{-\frac{\alpha}{4}}$ for some constant $C$ independent of $\lambda$, or equivalently that
\begin{equation}\label{bb}
\begin{split}
\Big(B_{\lambda}(-\Delta_{D}+1)^{-1}B^{*}_{\lambda}f\Big)(t)=\int_{0}^{T}\chi_{\lambda}(-\Delta_{D}+1)\psi^{2}(\frac{-\Delta_{D}}{\lambda^{2}})
e^{i(t-\tau)\Delta_{D}}\chi_{\lambda}f(\tau)d\tau=\\
=(-\Delta_{D}+1)(A_{\lambda}A_{\lambda}^{*}f)(t)+[\Delta_{D},\chi_{\lambda}]\int_{0}^{T}
e^{i(t-\tau)\Delta_{D}}\psi^{2}(\frac{-\Delta_{D}}{\lambda^{2}})\chi_{\lambda}
f(\tau)d\tau
\end{split}
\end{equation}
is bounded from $L^{2}_{T}H^{\frac{1}{2}}_{D}(\Omega)$ to
$L^{2}_{T}H^{-\frac{1}{2}}_{D}(\Omega)$ by
$C\lambda^{-\frac{\alpha}{2}}$. Here $A_{\lambda}$ is the operator
introduced in the proof of the case $s=0$. For the first term in the right hand side of \eqref{bb} we
apply \eqref{i} with $s=\frac{1}{2}$ and we obtain a bound $C\lambda^{-\frac{\alpha}{2}}$, while for the second term we make use of \eqref{ii} with $s=-\frac{1}{2}$ and of the fact that
$[\Delta_{D},\chi_{\lambda}]$ is bounded from
$H_{D}^{\frac{1}{2}}(\Omega)$ to $H_{D}^{-\frac{1}{2}}(\Omega)$
with a norm bounded by $C\lambda^{\alpha}$ in order to find a bound  from $L^{2}_{T}H^{\frac{1}{2}}_{D}(\Omega)$
to $L^{2}_{T}H^{-\frac{1}{2}}_{D}(\Omega)$ of at most
$C\lambda^{\frac{\alpha}{2}-1}\leq C\lambda^{ -\frac{\alpha}{2}}$.

\item The second term in the right hand side of \eqref{dlt} gives
\begin{equation}\label{secterm}
\|[\Delta_{D},\chi_{\lambda}]e^{it\Delta_{D}}\psi(\frac{-\Delta_{D}}{\lambda^{2}})u_{0}\|_{L^{2}_{T}H^{-\frac{1}{2}}_{D}(\Omega)}\lesssim
\end{equation}
\[
\lesssim\lambda^{2\alpha}\|\chi''_{\lambda}e^{it\Delta_{D}}\psi(\frac{-\Delta_{D}}{\lambda^{2}})u_{0}\|
_{L^{2}_{T}H^{-\frac{1}{2}}_{D}(\Omega)}+2\lambda^{\alpha}\|\chi_{\lambda}'\nabla\Big(e^{it\Delta_{D}}
\psi(\frac{-\Delta_{D}}{\lambda^{2}})u_{0}\Big)\|_{L^{2}_{T}H^{-\frac{1}{2}}_{D}(\Omega)}.
\]
For evaluating the first term in the last sum we use again \eqref{ii} with $s=0$
\begin{equation}
\lambda^{2\alpha}\|\chi''_{\lambda}e^{it\Delta_{D}}\psi(\frac{-\Delta_{D}}{\lambda^{2}})u_{0}\|_{L^{2}_{T}
H^{\frac{1}{2}}_{D}(\Omega)}\lesssim
\lambda^{2\alpha-\frac{\alpha}{4}}\|\tilde{\chi}_{\lambda}
\psi(\frac{-\Delta_{D}}{\lambda^{2}})u_{0}\|_{L^{2}(\Omega)}.
\end{equation}
where $\breve{\chi}_{\lambda}$ is a smooth cutoff function such that $\tilde{\chi}_{\lambda}$ is equal to $1$
on the support of $\chi_{\lambda}$ and we conclude since $\alpha<1$. For the second term we have
\begin{equation}
\|\chi_{\lambda}'\nabla\Big(e^{it\Delta_{D}}
\psi(\frac{-\Delta_{D}}{\lambda^{2}})u_{0}\Big)\|^{2}_{L^{2}(\Omega)}=
-\int\chi_{\lambda}'^{2}(\Delta_{D} u)\bar{u}
-2\lambda^{\alpha}Re\int\chi_{\lambda}'\chi''_{\lambda}(\nabla
u)\bar{u}
\end{equation}
\[
\leq\lambda^{2}\int\chi^{'2}_{\lambda}|\Big(e^{it\Delta_{D}}\tilde{\psi}(\frac{-\Delta_{D}}{\lambda^{2}})u_{0}\Big)\bar{u}|+
2\lambda^{\alpha+1/2}\|\tilde{\chi}_{\lambda}\|^{2}_{L^{2}(\Omega)}\lesssim \lambda^{2}\|\tilde{\chi}_{\lambda}u\|^{2}_{L^{2}(\Omega)},
\]
where we have set $\tilde{\psi}(x)=x\psi(x)$.
From \eqref{ii} with $s=0$ that we have already established,
we find, since $\alpha<1$,
\begin{equation}
\lambda^{\alpha}\|\chi_{\lambda}'\nabla\Big(e^{it\Delta_{D}}\psi(\frac{-\Delta_{D}}{\lambda^{2}})u_{0}\Big)\|_{L^{2}_{T}H^{-\frac{1}{2}}_{D}(\Omega)}
\lesssim\lambda^{\alpha}\|\tilde{\chi}_{\lambda}u\|_{L^{2}_{T}H^{\frac{1}{2}}(\Omega)}
\end{equation}
\[
\lesssim\lambda^{\frac{3\alpha}{4}}\|\tilde{\chi}_{\lambda}\psi(\frac{-\Delta_{D}}{\lambda^{2}})u_{0}
\|_{L^{2}(\Omega)}\lesssim\lambda^{1-\frac{\alpha}{4}}\|\tilde{\chi}_{\lambda}\psi(\frac{-\Delta_{D}}{\lambda^{2}})u_{0}\|
_{L^{2}(\Omega)}.
\]
\end{itemize}
\end{proof}
\end{itemize}

\textbf{Neumann:} \emph{All of the above results remain valid if we consider the Neumann
Laplacian $\Delta_{N}$: in fact let $w$ and $f$ be such that
\eqref{2d} holds. Using the same strategy as before we get:
\begin{equation}
\int_{\partial\Omega}\partial_{n}
u(\chi_{\lambda}\bar{u})d\sigma-\int_{\Omega}\chi_{\lambda}|\nabla
w|^{2}dx+\lambda^{2}\int_{\Omega}\chi_{\lambda}|w|^{2}dx-\lambda^{\alpha}\int_{\Omega}<\nabla
w,\nabla\chi_{\lambda}>\tilde{\chi}_{\lambda}\bar{w}dx=\int_{\Omega}\chi_{\lambda}^{2}f\bar{w}dx.
\end{equation}
Notice that, due to the Neumann boundary
conditions, the first term in the left hand side vanishes, so the
computations are almost the same as in the previous case. However
we must pay a little attention to the spectrum of the Neumann
Laplacian: for, we have to introduce a spectral cut-off $\phi\in C^{\infty}_{0}$ equal to $1$ close to $0$ and decompose
\begin{equation}
w=\phi(-\Delta_{N})w+(1-\phi(-\Delta_{N}))w,\quad
f=\phi(-\Delta_{N})f+(1-\phi(-\Delta_{N}))f,
\end{equation}
\[
u_{0}=\phi(-\Delta_{N})u_{0}+(1-\phi(-\Delta_{N}))u_{0}.
\]
We can then rewrite the above proof with $w$, $f$, $u_{0}$
replaced by $(1-\phi(-\Delta_{N}))w$, $(1-\phi(-\Delta_{N}))f$,
$(1-\phi(-\Delta_{N}))u_{0}$ in order to obtain similar estimates
in Theorem \ref{thm1}. To deal with the contributions of the
remaining terms we use the fact that in this situation the $L^{2}$
and $H_{N}^{k}$ norms are equivalent.}

\section{Estimates away from the obstacle}\label{away}
In this section we obtain bounds away from the obstacle. The main
idea is to construct a new function $v=\phi u$ which
will solve a problem with a nonlinearity supported in a compact
set; under these assumptions, it is proved that the free evolution
satisfies the usual Strichartz bounds (see \cite{stta02}). However, we have to take
into account the fact that the neighborhood outside of which we
will apply this result is of size $\lambda^{-\alpha}$ and thus we
will "lose" $\alpha$ derivative; however, this will not pose any
problem if $\alpha$ is chosen small enough, since it will be
covered by the loss of derivatives near the boundary. After a change of variables we can assume that $\Omega=\{x_{3}>0\}$ and thus $x_{3}$ defines the distance to the boundary. We define
$\phi\in C^{\infty}(\bar{\Omega})$ by
$\phi(x)|_{x_{3}\leq 1}=x_{3}$,
$\phi(x)|_{x_{3}\geq 2}=1$.
\begin{prop}
We have
\begin{equation}\label{aw}
\|(1-\chi_{\lambda})e^{it\Delta_{D}}\psi(\frac{-\Delta_{D}}{\lambda^{2}})u_{0}\|_{L^{2}_{T}L^{\frac{2d}{d-2}}(\Omega)}\lesssim
\lambda^{\alpha}\|\psi(\frac{-\Delta_{D}}{\lambda^{2}})u_{0}\|
_{L^{2}(\Omega)}.
\end{equation}
\end{prop}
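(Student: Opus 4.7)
The plan is to reduce the exterior estimate to the classical Strichartz inequality on $\mathbb{R}^{d}$ by cutting off $u(t)=e^{it\Delta_{D}}\psi(-\Delta_{D}/\lambda^{2})u_{0}$ away from the boundary. Pick a smooth function $\Phi_{\lambda}$, equal to $0$ in a neighborhood of $\partial\Omega$ of size $\tfrac{1}{2}\lambda^{-\alpha}$ and equal to $1$ on the support of $(1-\chi_{\lambda})$, built from the model cutoff $\phi$ above by $\Phi_{\lambda}(x)=\phi(\lambda^{\alpha}d(x,\partial\Omega))$ (possibly after straightening the boundary and using a partition of unity). Setting $v=\Phi_{\lambda}u$, the function $v$ vanishes identically near $\partial\Omega$, so extending it by $0$ produces a function on all of $\mathbb{R}^{d}$ which satisfies
\begin{equation*}
(i\partial_{t}+\Delta)v = [\Delta,\Phi_{\lambda}]u =: F, \qquad v|_{t=0}=\Phi_{\lambda}u_{0}.
\end{equation*}

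Next I would invoke the standard sharp Strichartz estimate on $\mathbb{R}^{d}$ for the admissible pair $(2,\tfrac{2d}{d-2})$, applied to the Duhamel formula for $v$: this yields
\begin{equation*}
\|v\|_{L^{2}_{T}L^{\frac{2d}{d-2}}(\mathbb{R}^{d})} \lesssim \|\Phi_{\lambda}u_{0}\|_{L^{2}(\Omega)} + \|F\|_{L^{1}_{T}L^{2}(\mathbb{R}^{d})},
\end{equation*}
and since $(1-\chi_{\lambda})u = (1-\chi_{\lambda})v$, the left-hand side controls the quantity we want. The $v_{0}$ piece is harmless. Everything then reduces to bounding $F$, which involves $\nabla\Phi_{\lambda}\cdot\nabla u$ and $(\Delta\Phi_{\lambda})u$, and both of these are supported in the $\lambda^{-\alpha}$-tubular neighborhood of $\partial\Omega$ where Theorem~\ref{thm1} applies.

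The main obstacle is the commutator estimate, since $\nabla\Phi_{\lambda}$ is of size $\lambda^{\alpha}$ and $\Delta\Phi_{\lambda}$ is of size $\lambda^{2\alpha}$, so $F$ has a naive size $\lambda^{\alpha+1}$ (using the frequency localization $\psi(-\Delta_{D}/\lambda^{2})$ to absorb a derivative). To recover the desired loss of only $\lambda^{\alpha}$, one exploits the smoothing effect \eqref{ii} of Theorem~\ref{thm1} in the $\lambda^{-\alpha}$-neighborhood: the derivatives of $\Phi_{\lambda}$ are supported precisely in a set of the form $\{\chi_{\lambda}\neq 0\}$, so one can write $\nabla\Phi_{\lambda}=\lambda^{\alpha}\tilde\chi_{\lambda}$ and $\Delta\Phi_{\lambda}=\lambda^{2\alpha}\tilde\chi_{\lambda}$ with $\tilde\chi_{\lambda}$ of the same type as $\chi_{\lambda}$, and then estimate $\|\tilde\chi_{\lambda}u\|_{L^{2}_{T}H^{1/2}_{D}}\lesssim \lambda^{-\alpha/4}\|\psi u_{0}\|_{L^{2}}$ via \eqref{ii}.

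Finally, combining these with a Cauchy--Schwarz in time (to pass from $L^{2}_{T}$ to $L^{1}_{T}$ on a bounded interval, or alternatively by using the dual admissible pair $(2,\tfrac{2d}{d+2})$ in the inhomogeneous Strichartz on $\mathbb{R}^{d}$ so that $F$ appears in $L^{2}_{T}L^{\frac{2d}{d+2}}$), the commutator contribution is bounded by $\lambda^{\alpha}\|\psi(-\Delta_{D}/\lambda^{2})u_{0}\|_{L^{2}(\Omega)}$, using $\alpha<1$ to discard the lower-order term $\Delta\Phi_{\lambda}\cdot u$. The bookkeeping---verifying that the $\lambda^{\alpha+1}$ raw loss is exactly compensated by the $\lambda^{-\alpha/4}$ smoothing gain and a $\lambda^{-1/2}$ from moving from $\nabla u$ at frequency $\lambda$ to $H^{1/2}$ regularity---is the only delicate point.
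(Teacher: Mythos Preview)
Your approach is genuinely different from the paper's, and as written it has a gap: the bookkeeping you sketch does not close. With either of the forcing norms you propose, the main commutator term $2\nabla\Phi_{\lambda}\cdot\nabla u$ gives, after using the precise smoothing \eqref{ii} with $s=\tfrac12$ (so that $\|\tilde\chi_{\lambda}u\|_{L^{2}_{T}H^{1}_{D}}\lesssim\lambda^{1/2-\alpha/4}\|\psi u_{0}\|_{L^{2}}$), a contribution of order $\lambda^{\alpha}\cdot\lambda^{1/2-\alpha/4}=\lambda^{3\alpha/4+1/2}$ in $L^{2}_{T}L^{2}$, hence the same (up to $T^{1/2}$) in $L^{1}_{T}L^{2}$; using instead the dual pair $L^{2}_{T}L^{2d/(d+2)}$ and H\"older on the thin support only buys an extra $\lambda^{-\alpha/3}$, leading to $\lambda^{5\alpha/12+1/2}$. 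In both cases the exponent strictly exceeds $\alpha$ throughout the range $0\le\alpha<\tfrac23$, so you do not recover the stated $\lambda^{\alpha}$ bound. Your last sentence asserts exact compensation, but $\lambda^{\alpha+1}\cdot\lambda^{-\alpha/4}\cdot\lambda^{-1/2}=\lambda^{3\alpha/4+1/2}\neq\lambda^{\alpha}$ unless $\alpha=2$.

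The paper avoids this difficulty entirely by choosing the cutoff to be \emph{independent of $\lambda$}: one takes $\phi$ equal to the distance to the boundary near $\partial\Omega$ (so $\phi(x)=x_{3}$ in boundary-flattened coordinates for $x_{3}\le1$, and $\phi\equiv1$ for $x_{3}\ge2$). Then $v=\phi u$ solves $(i\partial_{t}+\Delta_{D})v=[\Delta_{D},\phi]u$ with $[\Delta_{D},\phi]$ a first-order operator with \emph{bounded} coefficients; the ordinary $H^{1/2}$ smoothing effect (no $\alpha$ involved) gives $\|[\Delta_{D},\phi]u\|_{L^{2}_{T}H^{-1/2}_{D}}\lesssim\|\psi u_{0}\|_{L^{2}}$, and the Staffilani--Tataru/Burq--G\'erard--Tzvetkov machinery for compactly supported forcing then yields the Strichartz bound for $\phi u$ \emph{with no loss at all}. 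The $\lambda^{\alpha}$ appears only at the very end, from the pointwise inequality $(1-\chi_{\lambda})\le C\lambda^{\alpha}\,\phi\,(1-\chi_{\lambda})$, valid because $\phi\gtrsim\lambda^{-\alpha}$ on the support of $1-\chi_{\lambda}$.

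In short: your $\lambda$-dependent cutoff forces you to absorb large commutator coefficients with Theorem~\ref{thm1}, and the gain there is too weak; the paper's $\lambda$-independent cutoff makes the commutator harmless and does not use Theorem~\ref{thm1} at all for this step --- the $\lambda^{\alpha}$ loss is purely from dividing by $\phi$. If you want to salvage your route, you would need to place $F$ in $L^{2}_{T}H^{-1/2}_{\mathrm{comp}}$ and invoke the same Staffilani--Tataru type inhomogeneous estimate, together with the commutator bound $\|[\Delta,\Phi_{\lambda}]\|_{H^{1/2}\to H^{-1/2}}\lesssim\lambda^{\alpha}$ (which holds for frequency-localized inputs since $\alpha<1$); that would yield $\lambda^{3\alpha/4}$ and close, but at that point you are essentially reproducing the paper's argument with an unnecessary extra layer.
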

We set $v=\phi u$, where $u=e^{it\Delta_{D}}\psi(\frac{-\Delta_{D}}{\lambda^{2}})u_{0}$ solves
\eqref{LS}. Then $v$ solves the equation
\begin{equation}\label{3d}
\left\{\begin{array}{ll}
(i\partial_{t}+\Delta_{D})v=(\Delta_{D}\phi)u+2\nabla\phi\nabla
u+\frac{\partial v}{\partial_{n}}|_{\partial\Omega}\otimes\delta_{\partial\Omega}^{(0)}-v|_{\partial\Omega}\otimes
\delta_{\partial\Omega}^{(1)}=[\Delta_{D},\phi]u,\\
v|_{t=0}=\phi u|_{t=0},\quad
v|_{\partial\Omega}=0,
\end{array}
\right.
\end{equation}
where $\delta$ is the Dirac measure on $\partial\Omega$. It can
easily be seen that the last two terms vanish. We have
$[\Delta_{D},\phi]u\in
L^{2}_{T}H^{\frac{-1}{2}}_{comp}(\Omega)$ and (see \cite[Prop.2.7]{bgt03})
\begin{equation}\label{st}
\|[\Delta_{D},\phi]e^{it\Delta_{D}}\psi(\frac{-\Delta_{D}}{\lambda^{2}})u_{0}\|_{L^{2}_{T}H^{\frac{-1}{2}}_{D}(\Omega)}\lesssim
\|e^{it\Delta_{D}}\psi(\frac{-\Delta_{D}}{\lambda^{2}})u_{0}\|_{L^{2}_{T}H^{\frac{1}{2}}_{D}(\Omega)}
\lesssim\|\psi(\frac{-\Delta_{D}}{\lambda^{2}})u_{0}\|_{L^{2}(\Omega)}.
\end{equation}
The inhomogeneous part of the
equation \eqref{3d} satisfied by $v$ has compact spatial support
and therefore we can employ \cite[Thm.3]{stta02} (for $p=2$) and
\cite[Prop.2.10]{bgt03} in order to obtain Strichartz estimates
without losses for $v$
\begin{equation}\label{nlos}
\|\phi e^{it\Delta_{D}}\psi(\frac{-\Delta_{D}}{\lambda^{2}})u_{0}\|_{L^{p}_{T}L^{q}(\Omega)}\lesssim
\|\psi(\frac{-\Delta_{D}}{\lambda^{2}})u_{0}\|_{L^{2}(\Omega)},
\end{equation}
for every $(p,q)$ $d$-admissible pair, which in turn yields 
\begin{equation}\label{nnl}
\lambda^{-\alpha}\|(1-\chi_{\lambda})e^{it\Delta_{D}}\psi(\frac{-\Delta_{D}}{\lambda^{2}})u_{0}\|_{L^{p}_{T}L^{q}(\Omega)}\lesssim
\|(1-\chi_{\lambda})\phi e^{it\Delta_{D}}\psi(\frac{-\Delta_{D}}{\lambda^{2}})u_{0}\|_{L^{p}_{T}L^{q}(\Omega)}
\end{equation}
\[
\lesssim\|\phi e^{it\Delta_{D}}\psi(\frac{-\Delta_{D}}{\lambda^{2}})u_{0}\|_{L^{p}_{T}L^{q}(\Omega)}\lesssim
\|\psi(\frac{-\Delta_{D}}{\lambda^{2}})u_{0}\|_{L^{2}(\Omega)}.
\]
In particular, for $p=2$ we get \eqref{aw} from \eqref{nnl}.

\textbf{Neumann:} \emph{When dealing with the Neumann problem, this approach requires some
adjustments, but the above result remains valid (with $\alpha$
slightly modified). Let $\epsilon>0$ and consider
$\phi|_{x_{3}\leq 1}(x)=x_{3}^{1+\epsilon}$, $\phi|_{x_{3}\geq 2}(x)=1$.
Then $v=\phi u$ solves the equation
\begin{equation}\label{3n}
(i\partial_{t}+\Delta_{D})v=[\Delta_{D},\phi^{1+\epsilon}]u,\quad
v|_{t=0}=\phi u|_{t=0},\quad
\frac{\partial
v}{\partial\nu}|_{\partial\Omega}=(x_{3}^{\epsilon}u)|_{\partial\Omega}+
\phi\frac{\partial u}{\partial\nu}|_{\partial\Omega}=0.
\end{equation}
It's easy to see that one can obtain similar estimates, with
$\alpha$ replaced by $\alpha-\epsilon$; still, this makes no
difference for our purpose, since $\epsilon$ can be chosen as
small as we like.}

\section{Proof of Theorem $2$}\label{secthm2}
In this section we achieve the proof of Theorem \ref{thm2}. 
Taking $s=1/2$ in \eqref{ii} gives
\begin{equation}\label{near}
\|\chi_{\lambda}
e^{it\Delta_{D}}\psi(\frac{-\Delta_{D}}{\lambda^{2}})u_{0}\|_{L^{2}_{T}H^{1}_{D}(\Omega)}\lesssim\lambda^{\frac{1}{2}
-\frac{\alpha}{4}}\|\chi_{\lambda}\psi(\frac{-\Delta_{D}}{\lambda^{2}})
u_{0}\|_{L^{2}(\Omega)}
\end{equation}
from which we deduce
\begin{equation}
\|\chi_{\lambda}e^{it\Delta_{D}}\psi(\frac{-\Delta_{D}}{\lambda^{2}})u_{0}\|_{L^{2}_{T}L^{\frac{2d}{d-2}}(\Omega)}\lesssim
\|\chi_{\lambda}e^{it\Delta_{D}}\psi(\frac{-\Delta_{D}}{\lambda^{2}})u_{0}\|_{L^{2}_{T}H^{1}_{D}(\Omega)}
\lesssim\lambda^{\frac{1}{2}-\frac{\alpha}{4}}\|\psi(\frac{-\Delta_{D}}{\lambda^{2}})
u_{0}\|_{L^{2}(\Omega)}.
\end{equation}
An energy argument yields
\begin{equation}\label{en}
\|\chi_{\lambda}e^{it\Delta_{D}}\psi(\frac{-\Delta_{D}}{\lambda^{2}})u_{0}\|_{L^{\infty}_{T}L^{2}(\Omega)}\lesssim\|
\psi(\frac{-\Delta_{D}}{\lambda^{2}})u_{0}\|_{L^{2}(\Omega)}.
\end{equation}
Interpolation between \eqref{near} and \eqref{en} with weights
$\frac{2}{p}$ and $1-\frac{2}{p}$ respectively yields
\begin{equation}\label{nea}
\|\chi_{\lambda}e^{it\Delta_{D}}\psi(\frac{-\Delta_{D}}{\lambda^{2}})u_{0}\|_{L^{p}_{T}L^{q}(\Omega)}\lesssim\lambda^{\frac{1}{p}(1-\frac{\alpha}{2})}\|
\psi(\frac{-\Delta_{D}}{\lambda^{2}})u_{0}\|_{L^{2}(\Omega)}.
\end{equation}
We have also obtained estimates away from the boundary
\begin{equation}\label{awa}
\|(1-\chi_{\lambda})e^{it\Delta_{D}}\psi(\frac{-\Delta_{D}}{\lambda^{2}})u_{0}\|_{L^{p}_{T}L^{q}(\Omega)}
\lesssim\lambda^{\alpha}\|\psi(\frac{-\Delta_{D}}{\lambda^{2}})u_{0}\|
_{L^{2}(\Omega)}.
\end{equation}
If we take $\alpha=\frac{1}{p}(1-\frac{\alpha}{2})$ we find
$\alpha=\frac{2}{2p+1}$. Now, for $p=2$ this gives
$\alpha=\frac{2}{5}(\leq\frac{2}{3})$ and consequently we have
\begin{equation}\label{p}
\|e^{it\Delta_{D}}\psi(\frac{-\Delta_{D}}{\lambda^{2}})u_{0}\|_{L^{2}_{T}L^{\frac{2d}{d-2}}(\Omega)}
\lesssim\lambda^{\frac{2}{5}}\|\psi(\frac{-\Delta_{D}}{\lambda^{2}})
u_{0}\|_{L^{2}(\Omega)}.
\end{equation}
Interpolation between \eqref{p} and
\begin{equation}\label{ene}
\|e^{it\Delta_{D}}\psi(\frac{-\Delta_{D}}{\lambda^{2}})u_{0}\|_{L^{\infty}_{T}L^{2}(\Omega)}\lesssim\|
\psi(\frac{-\Delta_{D}}{\lambda^{2}})u_{0}\|_{L^{2}(\Omega)},
\end{equation}
with weights $\frac{2}{p}$ and $1-\frac{2}{p}$ yields, for every $(p,q)$ admissible pair, $p\geq 2$
\begin{equation}\label{thm}
\|e^{it\Delta_{D}}\psi(\frac{-\Delta_{D}}{\lambda^{2}})u_{0}\|_{L^{p}_{T}L^{q}(\Omega)}\lesssim\lambda^{\frac{4}{5p}}\|
\psi(\frac{-\Delta_{D}}{\lambda^{2}})u_{0}\|_{L^{2}(\Omega)}.
\end{equation}

\textbf{Neumann:} The case of the Neumann conditions could be handled in the same
way. However in \eqref{awa} we have $\alpha-\epsilon$ 
instead of $\alpha$ so we find 
$\alpha=\frac{2}{5}+\frac{4\epsilon}{5}$ and for every $(p,q)$ $d$-admissible pair and every $\epsilon>0$ we have
\begin{equation}\label{thm}
\|e^{it\Delta_{N}}\psi(\frac{-\Delta_{N}}{\lambda^{2}})u_{0}\|_{L^{p}_{T}L^{q}(\Omega)}\lesssim\lambda^{\frac{4}{5p}+\frac{8\epsilon}{5p}}\|
\psi(\frac{-\Delta_{N}}{\lambda^{2}})u_{0}\|_{L^{2}(\Omega)}.
\end{equation}

\section{Applications}
\subsection{Proof of Theorem \ref{thm3} for non-trapping obstacle}
The proof of Theorem \ref{thm3} relies on the contraction
principle applied to the equivalent integral equation associated
to \eqref{nls} with Dirichlet boundary conditions
\begin{equation}
u(t)=e^{it\Delta_{D}}u_{0}-i\int_{0}^{t}e^{i(t-\tau)\Delta_{D}}F(u(\tau))d\tau.
\end{equation}
The assumptions on $F$ and on the potential $V$ imply the
following pointwise estimates:
\begin{equation}
|F(u)|\lesssim |u|(1+|u|^{2}),\quad |\nabla F(u)|\lesssim |\nabla
u|(1+|u|^{2}),\quad |F(u)-F(v)|\lesssim |u-v|(1+|u|^{2}+|v|^{2}),
\end{equation}
\begin{equation}
|\nabla (F(u)-F(v))|\lesssim |\nabla
(u-v)|(1+|u|^{2}+|v|^{2})+|u-v|(|\nabla u|+|\nabla v|)(1+|u|+|v|).
\end{equation}
The aim is to show that for sufficiently small $T>0$ we can solve
the integral equation by a Picard iteration scheme in the Banach
space $X_{T}:=L_{T}^{\infty}H_{0}^{1}(\Omega)\cap
L_{T}^{p}W^{\sigma,q}(\Omega)$, where $2<p<\frac{12}{5}$ and
$\sigma=1-\frac{4}{5p}$. We equip $X_{T}$ with the norm
\[
\|u\|_{X_{T}}:=\|u\|_{L_{T}^{\infty}H_{0}^{1}(\Omega)}+\|(1-\Delta_{D})^{\frac{\sigma}{2}}u\|
_{L_{T}^{p}L^{q}(\Omega)}.
\]
\begin{rmq}
For $2<p<\frac{12}{5}$ one has the continuous embedding
$W^{1-\frac{4}{5p},q}(\Omega)\subset L^{\infty}(\Omega)$ where
$(p,q)$ is an admissible pair, since in this case
$1-\frac{4}{5p}>\frac{3}{q}$ and Proposition \ref{prop1}
implies $u\in L_{T}^{p}L^{\infty}(\Omega)$ for any $u\in X_{T}$.
\end{rmq}
Define a nonlinear map $\Phi$ as follows
\begin{equation}
\Phi(u)(t):=e^{it\Delta_{D}}u_{0}-i\int_{0}^{t}e^{i(t-\tau)\Delta_{D}}F(u(\tau))d\tau.
\end{equation}
We have to show that $\Phi$ is a contraction in a suitable ball
$B(0,R)$ of $X_{T}$. Using the Minkowski integral inequality
together with an energy argument we have, on the one hand
\[
\|\Phi(u)\|_{L_{T}^{\infty}H_{0}^{1}(\Omega)}\lesssim
\|u_{0}\|_{H_{0}^{1}(\Omega)}+\|F(u)\|_{L_{T}^{1}H_{0}^{1}(\Omega)}\lesssim
\]
\[
(|u_{0}\|_{H_{0}^{1}(\Omega)}+\|u\|_{L_{T}^{1}H_{0}^{1}(\Omega)}\|u\|_{L_{T}^{\infty}H_{0}^{1}(\Omega)}^{2}+
T\|u\|_{L_{T}^{\infty}H_{0}^{1}(\Omega)}+
T^{1-\frac{2}{p}}\|u\|_{L_{T}^{\infty}H_{0}^{1}(\Omega)}\|u\|_{L_{T}^{p}L^{\infty}(\Omega)}^{2})\leq
\]
\begin{equation}\label{est1}
C\Big(\|u_{0}\|_{H_{0}^{1}(\Omega)}+T\|u\|_{X_{T}}+
(T^{1-\frac{2}{p}}+T)\|u\|_{X_{T}}^{3}\Big)
\end{equation}
and also
\[
\|(1-\Delta_{D})^{\frac{\sigma}{2}}\Phi(u)\|_{L_{T}^{p}L^{q}(\Omega)}\lesssim
(\|(1-\Delta_{D})^{\frac{\sigma}{2}}u_{0}\|_{H_{D}^{\frac{4}{5p}}(\Omega)}+
\int_{0}^{T}\|(1-\Delta_{D})^{\frac{\sigma}{2}}(1+|u(s)|^{2})u(s)\|_{H_{D}^{\frac{4}{5p}}(\Omega)})\lesssim
\]
\begin{equation}\label{est2}
(\|u_{0}\|_{H_{0}^{1}(\Omega)}+
\int_{0}^{T}(1+\|u(s)\|_{L^{\infty}}^{2})\|u(s)\|_{H_{0}^{1}(\Omega)})\leq
C\Big(\|u_{0}\|_{H_{0}^{1}(\Omega)}+T\|u\|_{X_{T}}+
T^{1-\frac{2}{p}}\|u\|_{X_{T}}^{3}\Big).
\end{equation}
We chose $R>4C\|u_{0}\|_{H_{0}^{1}(\Omega)}$ and $T$ such that
$4C\Big(T+(T+T^{1-\frac{2}{p}})R^{2}\Big)<1$. In a similar way we
obtain
\begin{equation}\label{lip}
\|\Phi(u)-\Phi(v)\|_{X_{T}}\leq C\|u-v\|_{X_{T}}\Big(T+(T+T^{1-\frac{2}{p}})
(1+\|u\|_{X_{T}}^{2}+\|v\|_{X_{T}}^{2})\Big).
\end{equation}
Taking $T$ eventually smaller such that
$C\Big(T+(T+T^{1-\frac{2}{p}})(1+2R^{2})\Big)<1$, we deduce from
\eqref{est1}, \eqref{est2} and \eqref{lip} that $\Phi$ in a
contraction from $B(0,R)\subset X_{T}$ to $B(0,R)$. Therefore, if
we consider the sequence $\{v_{n}\}_{n\in\mathbb{N}}\subset X_{T}$
such that $v_{0}=u_{0}\in B(0,R)$, $v_{n+1}=\Phi(v_{n})$, then
$v_{n}$ converges in $X_{T}$ to the unique solution in $X_{T}$ of
the integral equation
\begin{equation}
u(t)=e^{it\Delta_{D}}u_{0}-i\int_{0}^{t}e^{i(t-\tau)\Delta_{D}}F(u(\tau))d\tau
\end{equation}
which yields the local well-posedness result. From \eqref{lip} we obtain
the Lipschitz property of the flow map. Using a standard
approximation argument we can derive the conservation laws. Next
due to \eqref{conserv}, the assumption on $V$ ($V(|u|^{2})\geq 0$)
and the Gagliardo-Nirenberg inequality we can extend the local
solution to an arbitrary time interval by reiterating the
local-posedness argument.

\textbf{Neumann:} When we consider the Neumann Laplacian we take $X_{T}$ like before
and we choose $\epsilon$ so small such that the embedding
$W^{1-\frac{4}{5p}-\frac{8\epsilon}{5p},q}(\Omega)\subset
L^{\infty}(\Omega)$ still holds: for $\epsilon<\frac{12-5p}{16}$
we are led to the same conclusion.

\subsection{Proof Theorem \ref{thm2} for a class of trapping obstacles}
In this part we prove Theorem \ref{thm2} for a class of trapping obstacles.
In this case, since there are
trapped trajectories (e.g. any line minimizing the distance
between two obstacles has an unbounded sojourn time), the plain
smoothing effect $H^{\frac{1}{2}}$ does not hold. However, one can
obtain a smoothing effect with a logarithmic loss (see
\cite[Thm1.7,Thm.4.2]{Bu1}).

\textbf{Assumptions:}
We suppose here
$\Theta=\cup_{i=1}^{N}\Theta_{i}$ is the disjoint union of a finite number
of balls $\Theta_{i}=B_{i}(o_{i},r_{i})$ of radius $r_{i}>0$ in $\mathbb{R}^{3}$. We denote by
$k$ the minimum of the curvatures of the spheres
$\mathbb{S}(r_{i})=\partial\Theta_{i}$, that is
$k=\min\{\frac{1}{\sqrt{r_{i}}}, i=\overline{1,N}\}$. Denote by
$L$ the minimum of the distances between two balls. Then, if
$N>2$, we assume that $kL>N$ ($L\geq l>0$ if $N=2$ for some strictly positive $l$). We
keep the notations of the previous sections.

Let $\chi\in C_{0}^{\infty}((-1,1))$, $\chi=1$ close to $0$
and for every $i$, set
$\chi_{i}(x):=\chi(\frac{|x-o_{i}|}{r_{i}}-1)$ and
$\chi_{i,\lambda}(x)=\chi(\lambda^{\alpha}(\frac{|x-o_{i}|}{r_{i}}-1))$
($\chi_{i,\lambda}$ vanishes outside a neighborhood of size
$\lambda^{-\alpha}$ of the ball $\Theta_{i}$). Set
$u=e^{it\Delta_{D}}\psi(\frac{-\Delta_{D}}{\lambda^{2}})u_{0}$ and
$v_{i}=\chi_{i}u$. Then $v_{i}$ satisfies
\begin{equation}\label{LS2}
(i\partial_{t}+\Delta_{D})v_{i}=[\Delta_{D},\chi_{i}]u,\quad
v_{i}|_{t=0}=\chi_{i}\psi(\frac{-\Delta_{D}}{\lambda^{2}})u_{0}.
\end{equation}
We introduce the Dirichlet Laplacian $\Delta_{D}^{i}$ acting
on $\Omega_{i}:=\mathbb{R}^{3}\setminus\Theta_{i}$ and continue to denote $\Delta_{D}$
the Dirichlet Laplacian outside the obstacle $\Theta$. Writing
Duhamel's formula we get
\begin{equation}\label{duhamel}
v_{i}(t)=e^{it\Delta_{D}^{i}}\chi_{i}\psi(\frac{-\Delta_{D}}{\lambda^{2}})u_{0}-i
\int_{0}^{t}e^{it\Delta_{D}^{i}}\psi(\frac{-\Delta_{D}}{\lambda^{2}})e^{-it\tau\Delta_{D}}
[\Delta_{D},\chi_{i}](u(\tau))d\tau.
\end{equation}
%\begin{prop}\label{propik}
The solution $v_{i}$ of \eqref{LS2} satisfies
\begin{equation}\label{trap}
\|\chi_{i,\lambda}v_{i}\|_{L_{T}^{2}H_{D}^{\frac{1}{2}}(\Omega)}\leq\|\chi_{i,\lambda}e^{it\Delta_{D}^{i}}
\chi_{i}\psi(\frac{-\Delta_{D}^{i}}{\lambda^{2}})u_{0}\|_
{L_{T}^{2}H_{D}^{\frac{1}{2}}(\Omega)}+
\end{equation}
\[
\|\chi_{i,\lambda}\int_{0}^{t}e^{it\Delta_{D}^{i}}\psi(\frac{-\Delta_{D}}{\lambda^{2}})e^{-it\tau\Delta_{D}}
[\Delta_{D},\chi_{i}](u(\tau))d\tau\|_{L_{T}^{2}H_{D}^{\frac{1}{2}}(\Omega)}.
\]
%\end{prop}
%\begin{proof}
From \cite[Thm.4.2]{Bu1} we know that the operator 
\begin{equation}\label{opai}
A_{i}^{*}f_{i}:=\int_{0}^{T}\psi(\frac{-\Delta_{D}}{\lambda^{2}})e^{-i\tau\Delta_{D}}\chi_{i}
f_{i}(\tau)d\tau
\end{equation}
is bounded from
${L_{T}^{2}H_{D}^{-\frac{1}{2}}(\Omega)}$ to
$H_{D}^{-\epsilon}(\Omega)$. Notice that in \eqref{opai} we can introduce a cutoff function $\tilde{\chi}_{i}\in C^{\infty}_{0}$ equal to $1$ on the support of $\chi_{i}$ without changing the integral modulo smoothing terms.
We need the next lemma:
\begin{lemma}\label{lempsi}
Let $\tilde{\psi}\in
C_{0}^{\infty}(\mathbb{R}_{+}^{*})$ be a smooth function such that $\tilde{\psi}(\tau)=1$ if $\psi(\tau)=1$ and such that $\sum_{k\geq 0}\tilde{\psi}(2^{-2k}\tau)=1$. Let $\check{\chi}_{i}\in C^{\infty}_{0}$ be equal to $1$ on the support of $\chi_{i}$. Then for $f\in L^{2}(\Omega)$
\begin{equation}
\tilde{\psi}(\frac{-\Delta^{i}_{D}}{\lambda^{2}})\tilde{\chi}_{i}\psi(\frac{-\Delta_{D}}{\lambda^{2}})(\chi_{i}f)=\tilde{\chi}_{i}\psi(\frac{-\Delta_{D}}{\lambda^{2}})(\chi_{i} f)
+O_{L^{2}(\Omega)}(\lambda^{-\infty})\chi_{i}f.
\end{equation}
\end{lemma}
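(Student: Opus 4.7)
The goal is to show that the function $\tilde\chi_i\psi(-\Delta_D/\lambda^2)(\chi_i f)$, which is spectrally localized at frequency $\sim\lambda$ with respect to the full Dirichlet Laplacian $\Delta_D$, is essentially also spectrally localized at that scale with respect to the single-obstacle Laplacian $\Delta_D^i$ on $\Omega_i=\mathbb{R}^3\setminus\Theta_i$, up to $O(\lambda^{-\infty})$. The key geometric input is that $\tilde\chi_i$ is supported in a fixed neighborhood of $\Theta_i$ disjoint from the other balls $\Theta_j$, $j\neq i$; on this set the two Laplacians act identically (same Dirichlet condition on $\partial\Theta_i$, free Laplacian in the interior), so the two functional calculi must agree modulo effects coming from the distant obstacles.

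First I would reduce the claim (after replacing $\tilde\psi$ by a slightly fatter cutoff so that $\tilde\psi\psi=\psi$, which alters things only by $O(\lambda^{-\infty})$) to proving
\begin{equation*}
\bigl[\tilde\psi(-\Delta_D^i/\lambda^2)-\tilde\psi(-\Delta_D/\lambda^2)\bigr]\tilde\chi_i\psi(-\Delta_D/\lambda^2)(\chi_i f)=O_{L^2(\Omega)}(\lambda^{-\infty})\chi_i f,
\end{equation*}
the commutator $[\tilde\psi(-\Delta_D/\lambda^2),\tilde\chi_i]$ being handled by the same method as the difference below. Next I would apply the Helffer--Sj\"ostrand formula to express each $\tilde\psi(-\Delta_D^{\sharp}/\lambda^2)$, $\sharp\in\{\emptyset,i\}$, as a contour integral of the resolvent $(z+\Delta_D^{\sharp}/\lambda^2)^{-1}$ against an almost analytic extension of $\tilde\psi$, and compare the two resolvents via the identity
\begin{equation*}
(z+\Delta_D^i/\lambda^2)^{-1}\tilde\chi_i u-\tilde\chi_i(z+\Delta_D/\lambda^2)^{-1}u=\lambda^{-2}(z+\Delta_D^i/\lambda^2)^{-1}[\Delta_D,\tilde\chi_i](z+\Delta_D/\lambda^2)^{-1}u,
\end{equation*}
valid because $\tilde\chi_i u$ extends by zero from $\Omega$ to $\Omega_i$. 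Using standard resolvent bounds $\|(z+\Delta_D^{\sharp}/\lambda^2)^{-1}\|\lesssim|\mathrm{Im}\,z|^{-1}$, the problem reduces to bounding the off-diagonal part of $\psi(-\Delta_D/\lambda^2)$ between the disjoint supports of $\chi_i$ and $\nabla\tilde\chi_i$.

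The main obstacle is proving that this off-diagonal piece is $O(\lambda^{-\infty})$ in operator norm. I would establish this by finite propagation speed for the wave equation: writing
\begin{equation*}
\psi(-\Delta_D/\lambda^2)=\frac{1}{\sqrt{2\pi}}\int_{\mathbb{R}}\hat\psi(t)\cos\bigl(t\sqrt{-\Delta_D}/\lambda\bigr)\,dt,
\end{equation*}
and using that $\cos(t\sqrt{-\Delta_D}/\lambda)$ propagates supports at speed $1/\lambda$, only times $|t|\gtrsim\lambda d_0$ can contribute to the kernel between two sets at positive distance $d_0$. Since $\hat\psi\in\mathcal{S}(\mathbb{R})$, the resulting operator is $O(\lambda^{-N})$ for every $N$, easily absorbing the polynomial-in-$\lambda$ losses from the Helffer--Sj\"ostrand contour and concluding the proof. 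The Neumann/mixed variant is identical.
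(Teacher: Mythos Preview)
Your approach is correct and takes a genuinely different route from the paper. The paper fixes $\lambda=2^{k_0}$, writes $\mathrm{Id}=\sum_k\tilde\psi(-2^{-2k}\Delta_D^i)$, and shows every $k\neq k_0$ contributes $O(\lambda^{-\infty})$; to do this it passes to auxiliary \emph{bounded} domains $\tilde B\cap\Omega$ and $\tilde B_i\cap\Omega$ (on which all four Laplacians coincide on suitably supported functions), expands in the discrete eigenbases $\{\phi_n\}$, $\{\phi_m^i\}$, and estimates the matrix elements $\langle\tilde\chi_i\phi_n,\phi_m^i\rangle$. For dyadic scales $k$ far from $k_0$ it integrates by parts with the Laplacian to gain powers of $\lambda_m^{i\,2}/\lambda_n^2$ (or its inverse); for $k$ close to $k_0$ it invokes Sogge's FIO parametrix for $\kappa(\sqrt{-\Delta}-\lambda)$ and integrates by parts in the resulting oscillatory integral. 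Your Helffer--Sj\"ostrand plus finite-propagation-speed argument is more direct and more robust: it uses only that the two Dirichlet problems agree on $\operatorname{supp}\tilde\chi_i$, needs no parametrix, and would work verbatim for general (non-ball) obstacles. One small point to tighten: the reduction to ``off-diagonal decay of $\psi(-\Delta_D/\lambda^2)$'' is slightly glossed, since the resolvent $(z+\Delta_D/\lambda^2)^{-1}$ sits between $[\Delta_D,\tilde\chi_i]$ and $\psi$. Either apply the finite-propagation argument to the composite symbol $\Psi_z(\tau)=\psi(\tau)/(z-\tau)\in C_0^\infty$ (the resulting $|\mathrm{Im}\,z|^{-N}$ losses, not $\lambda$-losses, are then absorbed by the almost-analyticity of the extension), or---simpler still---drop Helffer--Sj\"ostrand entirely and compare $\tilde\psi(-\Delta_D^i/\lambda^2)$ with $\tilde\psi(-\Delta_D/\lambda^2)$ directly via the identity $\cos(t\sqrt{-\Delta_D^i}/\lambda)g=\cos(t\sqrt{-\Delta_D}/\lambda)g$ for $g$ supported in $\operatorname{supp}\tilde\chi_i$ and $|t|<\lambda d_0$.
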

We postpone the proof of Lemma \ref{lempsi} for the end of this section.

\emph{End of the proof of Theorem \ref{thm2}:}
We introduce the operator 
\[
A_{i,\lambda}u_{0}(t,x):=\chi_{i,\lambda}e^{it\Delta_{D}^{i}}\tilde{\psi}(\frac{-\Delta_{D}^{i}}{\lambda^{2}})u_{0},
\]
which 
is continuous from $L^{2}(\Omega_{i})$ to
$L_{T}^{2}H_{D}^{\frac{1}{2}}(\Omega_{i})$ with the norm bounded
by $\lambda^{-\alpha/4}$. Indeed, since $\chi_{i,\lambda}$ vanishes outside a small neighborhood of
$\Theta_{i}$ we can apply Theorem \ref{thm1} in $\mathbb{R}^{3}\setminus\Theta_{i}$. 
If we take $f_{i}=[\Delta_{D},\chi_{i}]u$, then in view of Lemma \ref{lempsi} the last term in the right hand
side of \eqref{trap} writes $A_{i,\lambda}A_{i}^{*}f_{i}+O_{L^{2}(\Omega)}(\lambda^{-\infty})$.

If $\check{\chi}_{i}\in C^{\infty}_{0}$ equals $1$ on the support of $\chi_{i}$ we can estimate
\begin{equation}
\|[\Delta_{D},\chi_{i}]u\|_{L_{T}^{2}H_{D}^{-\frac{1}{2}}(\Omega)}\lesssim\|[\Delta_{D},\chi_{i}]\check{\chi_{i}}u\|_
{L_{T}^{2}H_{D}^{-\frac{1}{2}}(\Omega)}\lesssim
\end{equation}
\[
\|\check{\chi_{i}}e^{it\Delta_{D}}\psi(\frac{-\Delta_{D}}{\lambda^{2}})u_{0}\|_
{L_{T}^{2}H_{D}^{\frac{1}{2}}(\Omega)}\lesssim\|\psi(\frac{-\Delta_{D}}{\lambda^{2}})u_{0}\|_{H_{D}^{\epsilon}(\Omega)},
\]
where in the last inequality we applied \cite[Thm.1.7]{Bu1}.
Hence \eqref{trap} becomes
\begin{equation}\label{tra}
\|\chi_{i,\lambda}u\|_{L_{T}^{2}H_{D}^{\frac{1}{2}}(\Omega)}\lesssim\lambda^{-\frac{\alpha}{4}}\|\chi_{i,\lambda}\psi(\frac{-\Delta_{D}}{\lambda^{2}})
u_{0}\|_{L_{T}^{2}(\Omega)}+\lambda^{-\frac{\alpha}{4}+2\epsilon}\|\chi_{i,\lambda}
\psi(\frac{-\Delta_{D}}{\lambda^{2}})u_{0}\|_{L^{2}(\Omega)}.
\end{equation}
Set $\chi_{\lambda}:=\sum_{i=1}^{N}\chi_{i,\lambda}$. Since
$\{\chi_{i,\lambda}\}$ have disjoint supports, \eqref{tra} remains
valid for $\chi_{i,\lambda}$ replaced by $\chi_{\lambda}$. 
We have thus obtained a smoothing effect with a gain
$\alpha/4-2\epsilon$ and by interpolation with the energy estimate we have
\begin{equation}\label{inter}
\|\chi_{\lambda}u\|_{L^{p}_{T}L^{q}(\Omega)}\lesssim\lambda^{\frac{1}{p}(1-\frac{\alpha}{2}+\frac{\epsilon}{4})}\|\Psi(-\frac{\Delta_{D}}{\lambda^{2}})u_{0}\|_{L^{2}(\Omega)}.
\end{equation}
Away from $\partial\Theta$ we can
use the same arguments as in Section \ref{away} and find
\begin{equation}\label{exter}
\|(1-\chi_{\lambda})u\|_{L^{p}_{T}L^{q}(\Omega)}\lesssim\lambda^{\alpha}\|\Psi(-\frac{\Delta_{D}}{\lambda^{2}})u_{0}\|_{L^{2}(\Omega)}.
\end{equation}
Let $p=2+\epsilon$ (for $\epsilon>0$ sufficiently small) and take $\frac{1}{2+\epsilon}(1-\frac{\alpha}{2}+\frac{\epsilon}{4})=\alpha$.

\begin{proof}\emph{(of Lemma \ref{lempsi}):}
We fix $\lambda=2^{k_{0}}$, $k_{0}\geq 1$. We write
\begin{equation}\label{psidesc}
\tilde{\chi}_{i}\psi(-2^{-2k_{0}}\Delta_{D})=\sum_{k\geq 0}\tilde{\psi}(-2^{-2k}\Delta^{i}_{D})
\tilde{\chi}_{i}\psi(-2^{-2k_{0}}\Delta_{D})=
\end{equation}
\[
=\tilde{\psi}(-2^{-2k_{0}}\Delta^{i}_{D})
\tilde{\chi}_{i}\psi(-2^{-2k_{0}}\Delta_{D})+\sum_{k\neq k_{0}}\tilde{\psi}(-2^{-2k_{0}}(2^{-2(k-k_{0})}\Delta^{i}_{D}))
\tilde{\chi}_{i}\psi(-2^{-2k_{0}}\Delta_{D}).
\]
Let $\tilde{B}\subset\mathbb{R}^{3}$ be a ball of sufficiently large radius such that $\cup_{i=1}^{N}\text{supp}\tilde{\chi}_{i}\subset\subset B$ and $\tilde{B}_{i}\subset\mathbb{R}^{3}$ be balls such that $\text{supp}\tilde{\chi}_{i}\subset \tilde{B}_{i}$. Suppose that $\tilde{\chi}_{i}$ are suitably chosen such that the distances between any two such balls $\tilde{B}_{i}$ be strictly positive (this is always possible, eventually shrinking the supports of $\chi_{i}$). If we denote $\tilde{\Delta}_{D}$, resp. $\tilde{\Delta}^{i}_{D}$ the Laplace operators in the bounded domains $\tilde{B}\cap\Omega$ and resp. $\tilde{B}_{i}\cap\Omega$, we notice that 
\begin{equation}\label{egallapladi}
\Delta_{D}(\chi_{i}f)=\tilde{\Delta}_{D}(\chi_{i}f)=\Delta^{i}_{D}(\chi_{i}f)=\tilde{\Delta}^{i}_{D}(\chi_{i}f),\quad \forall f\in L^{2}(\Omega).
\end{equation}
Let $\phi_{n}$, $n\geq 1$, resp. $\phi^{i}_{m}$, $m\geq 1$ denote an orthonormal system of eigenfunctions of $\tilde{\Delta}_{D}$, resp. $\tilde{\Delta}^{i}_{D}$ associated to the eigenvalues $\lambda^{2}_{n}$, resp. $\lambda^{i2}_{m}$, i.e. such that $-\tilde{\Delta}_{D}\phi_{n}=\lambda^{2}_{n}\phi_{n}$,
$-\tilde{\Delta}^{i}_{D}\phi^{i}_{m}=\lambda^{i2}_{m}\phi^{i}_{m}$ for $n,m\geq 1$. Consequently if $f_{i}\in L^{2}(\Omega)$ is supported in $\tilde{B}_{i}\cap\Omega$ we have
\[
\psi(-2^{-2k_{0}}\Delta_{D})f_{i}=\sum_{n\geq 1}<f_{i},\phi_{n}>\psi(2^{-2k_{0}}\lambda^{2}_{n})\phi_{n},
\]
and if $g_{i}\in L^{2}(\Omega)$ is supported in $\tilde{B}_{i}\cap\Omega$ and we set $A_{k}=2^{-2(k-k_{0})}$, $k\geq 0$ than
\[
\tilde{\psi}(-2^{-2k_{0}}A_{k}\Delta^{i}_{D})g_{i}=\sum_{m\geq 1}<g_{i},\phi^{i}_{m}>\tilde{\psi}(2^{-2k_{0}}A_{k}\lambda^{i2}_{m})\phi^{i}_{m},
\]
If the support of $\tilde{\psi}$ is sufficiently large we show that the contribution of the sum in the second line of \eqref{psidesc} for $k\neq k_{0}$ is $O_{L^{2}(\Omega)}(\lambda^{-\infty})$. We consider separately the cases $k>k_{0}$, resp. $k<k_{0}$. Let first $k>k_{0}$: we distinguish two case, according to $2^{\epsilon k_{0}}\lesssim 2^{k}<2^{k_{0}}$ for some $\epsilon\geq 1/4$ or $2^{k}\lesssim 2^{k_{0}/4}$.
\begin{itemize}
\item Let $2^{k}< 2^{k_{0}/4}$ and set $A_{k}=2^{2(k_{0}-k)}$, then $A_{k}> 2^{k_{0}}=\lambda\simeq \lambda _{n}$.
\begin{equation}\label{psinm}
\tilde{\psi}(-2^{-2k_{0}}A_{k}\Delta^{i}_{D})\tilde{\chi}_{i}\psi(-2^{-2k_{0}}\Delta_{D})\chi_{i}f=
\end{equation}
\[
\sum_{n,m\geq 1}\tilde{\psi}(2^{-2k_{0}}A_{k}\lambda^{i2}_{m})\psi(2^{-2k_{0}}\lambda^{2}_{n})<\chi_{i}f,\phi_{n}><\tilde{\chi}_{i}\phi_{n},\phi^{i}_{m}>\phi^{i}_{m},
\]
\begin{equation}\label{pronm}
<\tilde{\chi}_{i}\phi_{n},\phi^{i}_{m}>=\frac{1}{\lambda_{n}^{2}}<-\tilde{\chi}_{i}\tilde{\Delta}_{D}\phi_{n},\phi^{i}_{m}>=
\frac{1}{\lambda_{n}^{2}}<-\tilde{\Delta}_{D}(\tilde{\chi}_{i}\phi_{n})+[\tilde{\Delta}_{D},\tilde{\chi}_{i}]\phi_{n},\phi^{i}_{m}>.
\end{equation}
Since $-\tilde{\Delta}_{D}(\tilde{\chi}_{i}\phi_{n})=-\tilde{\Delta}^{i}_{D}(\tilde{\chi}_{i}\phi_{n})$ is self-adjoint, the first term in the last line writes
\[
\frac{1}{\lambda^{2}_{n}}<\tilde{\chi}_{i}\phi_{n},-\tilde{\Delta}^{i}_{D}\phi^{i}_{m}>=\frac{\lambda^{i2}_{m}}{\lambda^{2}_{n}}<\tilde{\chi}_{i}\phi_{n},\phi^{i}_{m}>,
\]
and hence its contribution in the sum in \eqref{psinm} is
\begin{equation}\label{estnm}
\frac{\lambda^{i2}_{m}}{\lambda^{2}_{n}}\tilde{\psi}(2^{-2k_{0}}A_{k}\lambda^{i2}_{m})\psi(2^{-2k_{0}}\lambda^{2}_{n})<\tilde{\chi}_{i}\phi_{n},\phi^{i}_{m}>.
\end{equation}
Since $\psi$, $\tilde{\psi}$ are compactly supported away from $0$, the only nontrivial contribution in the sum \eqref{psinm} comes from indices $n$, $m$ such that $\lambda^{2}_{n}\simeq A_{k}\lambda^{i2}_{m}\simeq 2^{2k_{0}}$ and from \eqref{estnm} this will be $O_{L^{2}(\Omega)}(A^{-1}_{k})=O_{L^{2}(\Omega)}(1/\lambda_{n})$ which follows from the assumption $2^{\epsilon k_{0}}\lesssim 2^{k}$ for some $\epsilon\geq 1/4$. We estimate the last term in the right side of \eqref{pronm} 
\[
\frac{1}{\lambda^{2}_{n}}<[\tilde{\Delta}_{D},\tilde{\chi}_{i}]\phi_{n},\phi^{i}_{m}>=O_{L^{2}(\Omega)}(\lambda^{-1}_{n})=O_{L^{2}(\Omega)}(2^{-k_{0}}),
\]
since on the support of $\psi$, $\lambda_{n}\simeq 2^{k_{0}}$.
By iterating these arguments $M\geq 1$ times we deduce that the contribution of the sum \eqref{psidesc} is $O_{L^{2}(\Omega)}(2^{-Mk_{0}})$ for every $M\geq 1$.

\item Let now $2^{\epsilon k_{0}}\lesssim 2^{k}<2^{k_{0}}$, $\epsilon\geq 1/4$: in this case a simple integration by part is useless since the "error" is a multiple of the number of integrations.
If the support of $\tilde{\chi}_{i}$ is sufficiently small then \eqref{egallapladi} holds and we have
\begin{equation}\label{phinphimchii}
<\tilde{\chi}_{i}\phi_{n},\phi^{i}_{m}>=\int (\tilde{\chi}_{i}-1)\phi_{n}\overline{\phi}^{i}_{m}dx+\delta_{n=m},
\end{equation}
where $\delta_{n=m}$ is the Dirac distribution. In the sum \eqref{psinm} we see that the contribution from $n=m$ is zero, since the support of $\tilde{\psi}(A_{k}.)$ and $\psi(.)$ are disjoint. For first term in the right hand side of \eqref{phinphimchii} we use an argument of N.Burq, P.G\'erard and N.Tzvetkov \cite[Lemma 2.6]{bgtbilin}.
Let $\kappa\in\mathcal{S}(\mathbb{R})$ be a rapidly decreasing function such that $\kappa(0)=1$. 
 From a result of Sogge \cite[Chp.5.1]{so93} we can write, on the support of $\tilde{\chi}$ where \eqref{egallapladi} holds
\[
\kappa(\sqrt{-\tilde{\Delta}_{D}}-\lambda)f(x)=\lambda^{1/2}\int e^{i\lambda\varphi(x,y)}a(x,y,\lambda)f(y)dy+R_{\lambda}f(x),
\]
where $a(x,y,\lambda)\in C^{\infty}_{0}$ is an asymptotic assumption in $1/\lambda$ and $-\varphi(x,y)$ is the geodesic distance between $x$ and $y$, and where
\[
\forall p,s\in \mathbb{N}:\quad \|R_{\lambda}f\|_{H^{s}(\text{supp}\tilde{\chi}_{i})}\leq C_{p,s}\lambda^{-p}\|f\|_{L^{2}}.
\]  
We use \eqref{egallapladi} and $\kappa(\sqrt{-\Delta_{D}}-\lambda_{n})\phi_{n}=\phi_{n}$, $\kappa(\sqrt{-\Delta^{i}_{D}}-\lambda^{i}_{m})\phi^{i}_{m}=\phi^{i}_{m}$ in order to write $<(\tilde{\chi}_{i}-1)\phi_{n},\phi^{i}_{m}>$ as an integral (modulo a remaining term, small) 
\[
\int_{x,y,z}e^{i\lambda^{1/2}_{n}\Phi(x,y,z)}a(x,y,\lambda_{n})\bar{a}(x,z,\lambda^{i}_{m})\phi_{n}(y)\bar{\phi}^{i}_{m}(z)dy dzdx,
\]
with $\Phi(x,y,z)=\varphi(x,y)+\sqrt{\lambda^{i}_{m}/\lambda_{n}}\varphi(x,z)$. Since $|\nabla_{x}\varphi|$ is uniformly bounded from below and bounded from above together with all its derivatives, we obtain that there exist $c>0$, $C_{\beta}>0$ such that $|\nabla_{x}\Phi|\geq c$ and $\partial^{\beta}\Phi\leq C_{\beta}$. It remains to perform integrations by parts in the $x$ variable as many times as we want, each such integration providing a gain of a power $\lambda_{n}^{-1/2}$. 
%Notice that in \cite{bgtbilin} one deals with manifold without boundaries, but the same method works applied to $1-\tilde{\chi}_{i}$ (instead of $\tilde{\chi}_{i}$) which is supported away from a neighborhood of $\partial\Omega$. 
\end{itemize}
For $k>k_{0}$ and $2^{k_{0}}\lesssim 2^{k/4}$, we write
\begin{equation}\label{pronml}
<\tilde{\chi}_{i}\phi_{n},\phi^{i}_{m}>=\frac{1}{\lambda^{i2}_{m}}<-\tilde{\chi}_{i}\phi_{n},\tilde{\Delta}^{i}_{D}\phi^{i}_{m}>=
\frac{1}{\lambda^{i2}_{m}}<-\tilde{\Delta}_{D}(\tilde{\chi}_{i}\phi_{n})+[\tilde{\Delta}_{D},\tilde{\chi}_{i}]\phi_{n},\phi^{i}_{m}>,
\end{equation}
in which case, using the spectral localizations $\psi$, $\tilde{\psi}$, we gain a factor $A_{k}$ from the first term in \eqref{pronml} and a factor $A_{k}(\lambda^{i}_{m})^{-1}$ from the second one; iterating the argument as many times as we want we obtain contribution $O_{L^{2}(\Omega)}(A^{M}_{k})$. In the last case $2^{\epsilon k}\lesssim 2^{k_{0}}<2^{k}$, $\epsilon\geq 1/4$ we use again the arguments of \cite[Lemma 2.6]{bgtbilin}.
The proof is complete.
\end{proof}

\section{Appendix}
\subsection{Hankel functions}
We will start by recalling some properties of Hankel functions
that will be useful in determining the behavior of the solution to
\eqref{2d} (see \cite{Ab}). The Hankel function of order $\nu$ is
defined by
\begin{equation}
H_{\nu}(z)=\int_{-\infty}^{+\infty-i\pi} e^{z\sinh t-\nu t}dt.
\end{equation}
For all values of $\nu$, $\{H_{\nu}(z),\bar{H}_{\nu}(z)\}$
form a pair of linearly independent solutions of the Bessel's
equation $z^{2}y''+zy'+(z^{2}-\mu^{2})y=0$.
\begin{prop}\label{propasyexp}
We have the following asymptotic expansions (see \cite[Chp.9]{Ab}):
\begin{enumerate}

\item If $\nu$ is fixed, bounded and
$z=r\lambda\gg\nu>\frac{1}{2}$, $\frac{\nu}{z}\ll 1$ then
\begin{equation}\label{Ai}
H_{\nu}(z)\simeq\sqrt{\frac{2}{\pi z}}e^{i(\lambda
r-\frac{\pi\nu}{2}-\frac{\pi}{4})}(1+O(\lambda^{-1})),\quad \bar{H}_{\nu}(z)\simeq\sqrt{\frac{2}{\pi z}}e^{-i(\lambda
r-\frac{\pi\nu}{2}-\frac{\pi}{4})}(1+O(\lambda^{-1}));
\end{equation}

\item If $z=r\lambda>\nu\gg 1$ and $\frac{\nu}{z}\in[\epsilon_{0},1-\epsilon_{1}]$ for some small, fixed $\epsilon_{0}>0$, $\epsilon_{1}>0$, then writing $\frac{z}{\nu}=\frac{1}{\cos\beta}$ we have
\begin{equation}\label{Aiia}
H_{\nu}(z)=H_{\nu}(\frac{\nu}{\cos\beta})\simeq \sqrt{\frac{2}{\pi\nu
\tan\beta}}e^{i\lambda(\tan\beta-
\beta)+\frac{i\pi}{4}}(1+O(\nu^{-1})),
\end{equation}
\begin{equation}\label{Aiib}
\bar{H}_{\nu}(z)=\bar{H}_{\nu}(\frac{\nu}{\cos\beta})\simeq \sqrt{\frac{2}{\pi\nu
\tan\beta}}e^{i\lambda(\tan\beta-
\beta)+\frac{i\pi}{4}}(1+O(\nu^{-1}));
\end{equation}

\item If $z,\nu\gg 1$ are nearly equal we have the following formulas
\begin{enumerate}

\item If $z-\nu=\tau\nu^{1/3}$ with fixed $\tau$, bounded, then
\begin{equation}\label{Aiiiaj}
J_{\nu}(\nu+\tau\nu^{\frac{1}{3}})\simeq\frac{\sqrt[3]{2}}{\sqrt[3]{\nu}}Ai(-\sqrt[3]{2}\tau)(1+
O(\nu^{-1})),
\end{equation}
\begin{equation}\label{Aiiiay}
Y_{\nu}(\nu+\tau\nu^{\frac{1}{3}})\simeq-\frac{\sqrt[3]{2}}{\sqrt[3]{\nu}}(Bi(-\sqrt[3]{2}\tau)(1+
O(\nu^{-1})),
\end{equation}
where for $|\tau|$ large and $\xi=\frac{2}{3}\tau^{\frac{3}{2}}$ we have
\begin{equation}\label{aibim}
Ai(-\tau)\simeq\frac{1}{\pi^{\frac{1}{2}}\tau^{\frac{1}{4}}}\sin(\xi+\frac{\pi}{4})(1+O(\xi^{-1})),
\quad
Bi(-\tau)\simeq\frac{1}{\pi^{\frac{1}{2}}\tau^{\frac{1}{4}}}\cos(\xi+\frac{\pi}{4})(1+O(\xi^{-1}));
\end{equation}

\item If $z=\nu$, then
\begin{equation}\label{Aiiibj}
J_{\nu}(\nu)\simeq\frac{2^{1/3}}{3^{2/3}\Gamma(2/3)}\nu^{-1/3}(1+O(\nu^{-1})),
\end{equation}
\begin{equation}\label{Aiiiby}
 Y_{\nu}(\nu)\simeq-\frac{2^{1/3}}{3^{1/6}\Gamma(2/3)}\nu^{-1/3}(1+O(\nu^{-1}));
\end{equation}

\item If $|\nu-r\lambda|\leq C_{0}|r\lambda|$ then if $\nu z=r\lambda$ we have for $z<1$ (resp. $z>1$)
\begin{equation}\label{Aiiicj}
J_{\nu}(\nu z)\simeq(\frac{4\zeta}{1-z^{2}})^{1/4}\Big(\frac{Ai(\nu^{2/3}\zeta)}{\nu^{1/3}}+\frac{\exp(\frac{2}{3}\nu\zeta^{3/2})}{1+\nu^{1/6}|\zeta|^{1/4}}O(1/\nu^{4/3})\Big),
\end{equation}
\begin{equation}\label{Aiiicy}
Y_{\nu}(\nu z)\simeq-(\frac{4\zeta}{1-z^{2}})^{1/4}\Big(\frac{Bi(\nu^{2/3}\zeta)}{\nu^{1/3}}+\frac{\exp(|\Re(\frac{2}{3}\nu\zeta^{3/2})|)}{1+\nu^{1/6}|\zeta|^{1/4}}O(1/\nu^{4/3})\Big),
\end{equation}
where the function $\zeta$ is defined by
\begin{equation}\label{zetap}
\frac{2}{3}\zeta^{3/2}=\int_{z}^{1}\frac{\sqrt{1-t^{2}}}{t}dt=\ln[(1+\sqrt{1-z^{2}})/z]-\sqrt{1-z^{2}},\quad z\leq 1,
\end{equation}
\begin{equation}\label{zetam}
\frac{2}{3}(-\zeta)^{3/2}=\int_{1}^{z}\frac{\sqrt{t^{2}-1}}{t}dt=\sqrt{z^{2}-1}-\arccos{(1/z)},\quad z\geq 1,
\end{equation}
and where for $\tau$ large and $\xi=\frac{2}{3}\tau^{3/2}$ we have
\begin{equation}\label{aibip}
Ai(\tau)=\frac{1}{2\pi^{\frac{1}{2}}\tau^{\frac{1}{4}}}e^{-\xi}(1+O(\xi^{-1})),
\quad
Bi(\tau)=\frac{1}{2\pi^{\frac{1}{2}}\tau^{\frac{1}{4}}}e^{\xi}(1+O(\xi^{-1})).
\end{equation}
Taking $\tau=\nu^{2/3}\zeta$, we compute $Ai(\nu^{2/3}\zeta)$ using \eqref{aibip}, \eqref{aibim} with $\xi=\frac{2}{3}\zeta^{3/2}\nu$.
\end{enumerate}
\end{enumerate}
Here $J_{\nu}$ and $Y_{\nu}$ are the Bessel functions of the first
kind and $H_{\nu}(z)=J_{\nu}(z)+iY_{\nu}(z)$.
\end{prop}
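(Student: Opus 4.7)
The plan is to derive the four asymptotic regimes of Proposition \ref{propasyexp} from the integral representation
\[
H_\nu(z)=\int_{-\infty}^{+\infty-i\pi}e^{z\sinh t-\nu t}\,dt
\]
by the method of steepest descent, organizing the analysis around the saddle points of the phase $\Phi(t)=\sinh t-(\nu/z)t$, which satisfy $\cosh t=\nu/z$. The qualitative behavior of these saddles, and whether or not they are well separated, dictates which of the four regimes applies.

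First, for case (1) with $\nu$ fixed and $z\to\infty$, the saddle points are at $t=\pm i\pi/2$ (since $\nu/z\to 0$), the contour is deformable to a standard steepest descent contour through the upper saddle, and a Laplace-type expansion reproduces the Hankel asymptotic $\sqrt{2/(\pi z)}\,e^{i(z-\pi\nu/2-\pi/4)}$ with error $O(z^{-1})$; the expansion for $\bar H_\nu$ follows by complex conjugation. For case (2) with $\nu/z=\cos\beta\in[\epsilon_0,1-\epsilon_1]$, both saddles $t=\pm i\beta$ are simple and uniformly separated, so the Debye expansion applies: steepest descent through $t=i\beta$ gives exactly \eqref{Aiia}, the phase at the saddle producing $\lambda(\tan\beta-\beta)$ and the Gaussian width producing the $\sqrt{2/(\pi\nu\tan\beta)}$ prefactor. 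Here the uniformity in $\beta$ away from $0$ and $\pi/2$ is essential and is what forces the hypotheses $\nu/z\in[\epsilon_0,1-\epsilon_1]$.

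The harder case is (3), the transitional regime where $z$ and $\nu$ are comparable, because the two saddle points coalesce and ordinary stationary phase breaks down. The standard route, going back to Langer, is to introduce a change of variable $t\mapsto\zeta$ that reduces $\Phi$ to a cubic, after which the integral becomes an Airy-type integral $\int e^{\nu(-\frac{1}{3}s^3+\zeta s)}\,ds$ up to slowly varying amplitude. This gives the uniform expansions \eqref{Aiiicj}--\eqref{Aiiicy}, with $\zeta$ defined by \eqref{zetap}--\eqref{zetam}, in terms of $Ai$ and $Bi$. Specializing to $z=\nu+\tau\nu^{1/3}$ (so $\zeta\sim -2^{1/3}\tau/\nu^{2/3}$) yields \eqref{Aiiiaj}--\eqref{Aiiiay}; specializing to $z=\nu$ (so $\zeta=0$) and using $Ai(0)=3^{-2/3}/\Gamma(2/3)$ and $Bi(0)=3^{-1/6}/\Gamma(2/3)$ yields \eqref{Aiiibj}--\eqref{Aiiiby}. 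Finally, the Airy asymptotics \eqref{aibim} and \eqref{aibip} for $|\tau|$ large match the Debye regime (2) and the exponentially decreasing/growing regime on $z<\nu$, ensuring that the uniform expansion is consistent across the transition.

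The main obstacle is step (3): verifying that the Langer change of variables $\zeta\mapsto t$ is smooth and bounded with bounded derivatives uniformly in $\nu$, and controlling the remainder in the form stated with the explicit factor $\exp(\frac{2}{3}\nu\zeta^{3/2})/(1+\nu^{1/6}|\zeta|^{1/4})$. Once that uniform bound is in hand, the specializations in (3)(a)--(3)(c) and the consistency with (1) and (2) are essentially computational, so the real work is packaging the uniform Airy expansion across the turning point $z=\nu$.
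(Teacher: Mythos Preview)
Your outline is the standard route to these asymptotics and is essentially correct. However, the paper does not actually prove Proposition \ref{propasyexp}: it is stated as a compendium of known results with a citation to Abramowitz--Stegun, Chapter~9, and no argument is given.

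What the paper does argue (in the subsequent Proposition \ref{prophank}) is a simplified form of the uniform expansion in your case (3)(c), and the method there matches yours closely: an integral representation with phase $\phi(t,z)=z\sin t-t$, stationary phase for the Debye regime, and the Chester--Friedman--Ursell change of variables $\phi(t,z)=\zeta\tau-\tau^{3}/3$ to handle the coalescing saddle points near $z=1$. The only minor difference is that you start from the $\sinh$-contour representation of $H_{\nu}$ while the paper starts from the $\sin$-contour representation of $J_{\nu}$; these are equivalent entry points. Your identification of the main difficulty (uniform smoothness of the Langer change of variables and control of the remainder in the stated form) is accurate, and the paper likewise defers that point to the references (Olver and Chester--Friedman--Ursell) rather than carrying it out.
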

%\begin{rmq}
%Another form of \eqref{jtrans}, \eqref{ytrans} for $z>1$ close to $1$ (see \cite{ol74})
%\begin{equation}\label{hank}
%H_{\nu}(\nu z)\simeq\sqrt{2}(\frac{-\zeta}{z^{2}-1})^{1/4}\Big(A(\nu^{2/3}\zeta)\nu^{-1/3}\sum_{m=0}^{\infty}\frac{a_{m}(\zeta)}{\nu^{2m}}+A'(\nu^{2/3}\zeta)\nu^{-5/3}\sum_{m=0}^{\infty}\frac{b_{m}(\zeta)}{\nu^{2m}}\Big),
%\end{equation}
%where $a_{0}(\zeta)=1$ and where the coefficients $a_{m}$, $b_{m}$ are smooth functions of $z$. 
%\end{rmq}
\begin{rmq}
We remark that $\zeta$ defined in \eqref{zetap}, \eqref{zetam} is analytic in $z$, even at $z=1$ and $d\zeta/dz<0$ there; also, at $z=1$, $\zeta=0$ and $(1-z^{2})^{-1}\zeta=2^{-2/3}$. The formulas \eqref{Aiiicj}, \eqref{Aiiicy} are among the deepest and most important results in the theory of Bessel functions. These results do not appear in the treatise of Watson, having been established after Watson's second edition was published. See Olver \cite{ol74} and \cite{Ab}. In the paper we use a simpler form of these asymptotic expansions for which we give an idea of the proof inspired from \cite{chester}.
\end{rmq}
\begin{prop}\label{prophank}
For $z>1$ close to $1$ and $\nu\gg 1$ large enough we have
\begin{equation}\label{jutil}
J_{\nu}(\nu z)\simeq \frac{\sqrt{2}}{\nu^{1/3}}(\frac{\zeta}{z^{2}-1})^{1/4}Ai(-\nu^{2/3}\zeta),
\end{equation}
\begin{equation}\label{yutil}
Y_{\nu}(\nu z)\simeq -\frac{\sqrt{2}}{\nu^{1/3}}(\frac{\zeta}{z^{2}-1})^{1/4}Bi(-\nu^{2/3}\zeta),
\end{equation}
uniformly in $z$, where $\frac{2}{3}\zeta^{3/2}=\sqrt{z^{2}-1}-\arccos(1/z)$ (see \cite{ol74}).
\end{prop}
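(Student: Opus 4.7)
The plan is to derive these uniform Airy-type asymptotics via the Chester-Friedman-Ursell method of coalescing saddle points applied to the Debye integral representation
$$H_\nu(\nu z) = \frac{1}{\pi i}\int_C e^{\nu(z\sinh t - t)}\,dt,$$
where $H_\nu = J_\nu + iY_\nu$ and $C$ is a standard contour through the saddle points of $\phi(t) = z\sinh t - t$. The saddle equation $z\cosh t = 1$ has, for $z>1$ close to $1$, the conjugate pair $t_\pm = \pm i\arccos(1/z)$, which coalesces at the turning point $t=0$ as $z\to 1^+$. Because of this coalescence, a single-saddle steepest-descent argument fails to be uniform in $z$, and the cubic CFU substitution is needed.

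First I would introduce an analytic change of variable $t = t(s,\zeta)$ defined implicitly by
$$z\sinh t - t = \frac{s^3}{3} - \zeta s,$$
with the branches chosen so that $t_\pm$ corresponds to $s=\pm\sqrt{\zeta}$. Matching the difference of critical values on both sides forces the compatibility
$$\tfrac{2}{3}\zeta^{3/2} = \sqrt{z^2-1} - \arccos(1/z),$$
which is precisely the defining relation for $\zeta$ in the proposition. Analyticity and univalence of $t(s,\zeta)$ in a fixed neighbourhood of $(0,0)$, uniformly in small $\zeta$, is the heart of the CFU step and follows from the implicit function theorem applied after verifying that both sides of the defining equation have matching degenerate structure at the turning point. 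From $\phi''(t_\pm)\,(dt/ds)^2 = 2s$ and $\phi''(t_\pm) = z\sinh t_\pm = \pm i\sqrt{z^2-1}$, one computes $|dt/ds|_{s=\pm\sqrt{\zeta}} = (\zeta/(z^2-1))^{1/4}$, recovering the prefactor in the claim.

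After the substitution the integral becomes $\frac{1}{\pi i}\int_{\tilde C} e^{\nu(s^3/3 - \zeta s)}(dt/ds)\,ds$. Expanding the Jacobian at the saddle as $(\zeta/(z^2-1))^{1/4} + O(s-s_\pm)$ produces a leading cubic-phase integral. Rescaling $s = \nu^{-1/3}u$ and deforming $\tilde C$ onto the standard Airy contours identifies this with $2\nu^{-1/3}\bigl(Ai(-\nu^{2/3}\zeta) - iBi(-\nu^{2/3}\zeta)\bigr)$ (up to orientation absorbed in CFU), and taking real and imaginary parts yields \eqref{jutil} and \eqref{yutil} respectively, with the overall $\sqrt 2$ coming from the normalization of the Airy integral.

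The main obstacle is the uniformity of the CFU substitution as $\zeta \to 0^+$: one must check that $t(s,\zeta)$ remains analytic on a disc of fixed radius and that the deformed contour still picks up both saddles, so that the error from truncating the Jacobian expansion is $O(\nu^{-1})$ relative to the Airy leading term (which is of order $\nu^{-1/6}\zeta^{-1/4}$ in the oscillatory regime and $O(1)$ at the turning point itself). This uniformity is the substantive content of \cite{chester}; once admitted, the identification of the leading coefficient $(\zeta/(z^2-1))^{1/4}$ and the matching of real and imaginary parts to $J_\nu, Y_\nu$ is mechanical.
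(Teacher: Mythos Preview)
Your proposal is correct and follows essentially the same route as the paper: both apply the Chester--Friedman--Ursell cubic change of variable to the Debye integral representation, match critical values to define $\zeta$, and read off the Airy prefactor $(\zeta/(z^2-1))^{1/4}$ from the Jacobian at the saddle. The paper works with the real-phase integral $J_\nu(\nu z)=\frac{1}{2\pi}\int e^{i\nu(z\sin t - t)}\,dt$ and compares it directly with the Airy integral via stationary phase, whereas you start from the Hankel contour with $\phi(t)=z\sinh t - t$ and take real and imaginary parts at the end; this is a cosmetic difference, and your version is in fact more explicit about the CFU mechanics and the uniformity issue than the paper's sketch.
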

\begin{proof}
With a suitable choice of contour we have 
\begin{equation}\label{jjj}
J_{\nu}(\nu z)=\frac{1}{2\pi}\int e^{i\nu\phi(t,z)}dt,\quad \phi(t,z)=z\sin t-t.
\end{equation}
For $\nu\gg 1$ and $z>1$ the saddle points are real, $\tilde{t}=\arccos(1/z)$ and the critical value is $\phi(t(z),z)=\sqrt{z^{2}-1}-\arccos(1/z)$. Now if $\zeta(z)$ is defined in terms of the exponent in the Debye expansion, it is analytic in $z$ for $z$ near $1$ and we have
\begin{equation}
Ai(-\nu^{2/3}\zeta)=\frac{1}{2\pi}\int e^{-is\nu^{2/3}\zeta+is^{3}/3}ds=\frac{\nu^{1/3}}{2\pi}\int e^{i\nu(-t\zeta+t^{3}/3)}dt
\end{equation}
with critical points $t^{2}=\zeta$, thus we get \eqref{jutil} applying the stationary phase. We obtain the Debye approximations
\begin{equation}\label{jfaible}
J_{\nu}(\nu z)\simeq \Big(\frac{2}{\pi\nu\sqrt{z^{2}-1}}\Big)^{1/2}\cos[\nu\sqrt{z^{2}-1}-\nu\arccos(1/z)-\pi/4]
\end{equation}
by replacing the Airy function by its asymptotic expansion, thus the proper condition for its validity is $\nu^{2/3}\zeta\gg 1$. For small $\nu^{2/3}\zeta$ we are in the regime $\nu(z-1)=\tau\nu^{1/3}$ for which we have the estimations \eqref{Aiiiaj}, \eqref{Aiiiay}. An extension of this result giving \eqref{Aiiicj}, \eqref{Aiiicy} uses a result of Chester, Friedman and Ursell who showed that a similar reduction is possible whenever two saddle points coalesce: if $\partial_{t}\phi(\tilde{t},1)=0$ and $\partial^{2}_{tt}\phi(\tilde{t},1)=0$ but $\partial^{3}_{ttt}\phi(\tilde{t},1)\neq 0$, then an integral of the form
\eqref{jjj}  has a uniform asymptotic expansion in terms of the Airy function and its derivative. Their method was to make a change of variables so that 
\begin{equation}
\phi(t,z)=\zeta\tau-\tau^{3}/3,
\end{equation}
which holds exactly and uniformly; it is not merely an approximation for $z$ near $1$.
\end{proof}

\nocite{*}

\end{document}